\documentclass[a4paper,11pt, reqno]{amsart}
\usepackage[latin1]{inputenc}

\usepackage{amssymb}
\usepackage{amsmath}
\usepackage{amsthm}
\usepackage{graphicx}
\usepackage{a4wide}
\usepackage{geometry}
\usepackage{amsmath}
\usepackage{amsfonts}
\usepackage{amssymb}
\usepackage{graphicx}%
\setcounter{MaxMatrixCols}{30}
\providecommand{\U}[1]{\protect\rule{.1in}{.1in}}

\renewcommand{\ll}{{\ell}}

\newcommand{\be}[1]{\begin{equation}\label{#1}}
\newcommand{\ee}{\end{equation}}

\usepackage{color}

\let\pa\partial
\let\r\rho

\geometry{left=3cm,right=3cm,top=4cm,bottom=3cm}

\newtheorem{thm}{Theorem}
\newtheorem{lem}{Lemma}
\newtheorem{prop}{Proposition}
\newtheorem{rem}{Remark}
\newtheorem{assumption}{Assumption}

\newcommand{\beq}{\begin{eqnarray}}
\newcommand{\eeq}{\end{eqnarray}}
\newcommand{\beqs}{\begin{eqnarray*}}
\newcommand{\eeqs}{\end{eqnarray*}}
\newcommand{\bequ}{\begin{equation}}
\newcommand{\eequ}{\end{equation}}

\def\r{\rho}

\def\T{{\cal T}}
\def\M{{\cal M}}
\newcommand{\cal}{\mathcal}

\date{
June 24, 2022
}

\let\pa\partial

\makeatother
\begin{document}
\title[]{Global well-posedness for the thermodynamically refined passively transported nonlinear moisture dynamics with phase changes}
\author{Sabine Hittmeir}
\address[S. Hittmeir]{Fakult\"at f\"ur Mathematik, Universit\"at Wien, Oskar-Morgenstern-Platz 1, 1090 Wien, Austria}
\email{sabine.hittmeir@univie.ac.at}
\author{Rupert Klein}
\address[R. Klein]{FB Mathematik \& Informatik, Freie Universit\"at Berlin, Arnimallee 6, 14195 Berlin, Germany}
\email{rupert.klein@math.fu-berlin.de}
\author{Jinkai Li}
\address{Jinkai Li, South China Research Center for Applied Mathematics and Interdisciplinary Studies, School of Mathematical Sciences, South China Normal University, Zhong Shan Avenue West 55, Guangzhou 510631, P. R. China}
\email{jklimath@m.scnu.edu.cn; jklimath@gmail.com}
\author{Edriss~S.~Titi}
\address[Edriss~S.~Titi]{
Department of Mathematics, Texas A\&M University, 3368 TAMU, College Station,
TX 77843-3368, USA. Department of Applied Mathematics and Theoretical Physics, University of Cambridge, Cambridge CB3 0WA, U.K.
ALSO, Department of Computer Science and Applied Mathematics, Weizmann Institute of Science, Rehovot 76100, Israel.}
\email{titi@math.tamu.edu; Edriss.Titi@maths.cam.ac.uk}

\keywords{well-posedness for nonlinear moisture dynamics; primitive equations; moisture model for warm clouds with phase transition}.
\subjclass[2010]{ 35A01,
35B45, 35D35, 35M86, 35Q30, 35Q35, 35Q86, 76D03, 76D09, 86A10}

\maketitle
\begin{abstract}
In this work we study the global solvability of moisture dynamics with phase changes for warm clouds. We thereby in
comparison to previous studies \cite{HKLT} take into account the different gas constants for dry air and water vapor as
well as the different heat capacities for dry air, water vapor and liquid water, which leads to a much stronger
coupling of the moisture balances and the thermodynamic equation. This refined thermodynamic setting has been
demonstrated to be essential e.g. in the case of deep convective cloud columns in \cite{HK}. The more complicated
structure requires careful derivations of sufficient a priori estimates for proving global existence and uniqueness of
solutions.
\end{abstract}

\allowdisplaybreaks
\section{Introduction}
 Precipitation causes one of the major uncertainties in weather forecast and climate modelling and thus also the
 incorporation of moisture and phase changes into atmospheric flow models is still actively debated, see e.g.\,\cite{B}.
In the framework of systematically derived reduced mathematical models for atmospheric dynamics, it has been shown that not
only the inclusion of moist processes alone, but also the detailed structure of the moist-air submodels can decisively affect the
overall flow dynamics, see, e.g., \cite{SmithStechmann2017,BoualemsBook2019,StechmannHottovy2020}. Often the difference of the gas constants for water vapor and dry air is neglected and further the simple form of the dry
ideal gas law is assumed to hold. Even more typically also the dependence of the internal energy on the moisture
components is neglected, which results in a much simpler form of the thermodynamic equation. So far global well-posedness
of solutions to moisture models has only been proven based upon these assumptions, see also
\cite{BCZ,CZF,CZT,CZH,HKLT,HKLT2}. As demonstrated e.g.\,in the asymptotical analysis in \cite{HK} for deep convective
cloud columns, exactly these refined thermodynamics lead to a much  stronger coupling of the thermodynamic equation (see
below) to the moisture components and thereby even change the force balances to leading order. The aim here is to also
incorporate them into the analysis,  where the refined thermodynamical setting in comparison to \cite{HKLT} requires a different approach for proving a priori nonnegativity and uniform boundedness of the solution components, since the
antidissipative term in the equation for temperature does not vanish anymore when rewriting it in terms of the potential
temperature. We thus employ  an iterative method similar to the one used by Coti Zelati et al.\,\cite{CZH} to derive an upper bound on the temperature.

Coti Zelati et al.\,in \cite{BCZ,CZF,CZH,CZT} analysed a basic moisture model consisting of one moisture quantity coupled to temperature and containing only the process of condensation
during upward motion, see e.g.\,\cite{HW}. Since the source term there is modeled via a Heavy side
function as a switching term between saturated and undersaturated regions, the analysis requires elaborate techniques.  The approach based on differential inclusions and variational techniques has then further been applied to the moisture model coupled to the primitive equations in \cite{CZH}.

In preceding works \cite{HKLT, HKLT2} we studied a moisture model consisting of three moisture quantities for  water vapor, cloud water, and rain water, which contains besides the phase changes condensation and evaporation also the autoconversion of cloud water to rain water
after a certain threshold is reached, as well as  the collection of cloud water by the falling rain droplets. It
corresponds to a basic form of a bulk microphysics model in the spirit of Kessler \cite{Ke} and Grabowski and
Smolarkiewicz \cite{GS}. In \cite{HKLT} we assumed the velocity field to be given and studied the
moisture balances coupled to the thermodynamic equation through the latent heat. In \cite{HKLT2} this moisture model has been successfully coupled  to the primitive equations by taking over the ideas of Cao and Titi \cite{CT} for their breakthrough on
the global solvability of the latter system.

In this work we extend this moisture model for warm clouds consisting of three moisture balances and the thermodynamic equation by the refined thermodynamic setting as explained above, which leads in particular to a much stronger coupling of the model equations.

\color{black}

In the remainder of this section we introduce the moisture model. In Section \ref{sec.main} we then formulate the full problem with boundary and side conditions and state the main result on the global existence and uniqueness of bounded solutions. In Section \ref{sec.apx} we carry out the proof for the existence and uniqueness of strong solutions.

\subsection{Governing equations}

When modelling atmospheric flows in general,
the full compressible governing equations need to be considered. However, under the assumption of hydrostatic balance,
which in particular guarantees the pressure to decrease monotonically in height, the pressure $p$ can be used as the vertical coordinate, which have the  main advantage that the continuity equation takes the form of  the  incompressibility condition (see also \eqref{inc} below).
We therefore  work in the following with the governing equations in the pressure coordinates $(x,y,p)$ and as in \cite{HKLT} assume the velocity field to be given
$$
\overline{\mathbf{v}}=(\overline{\mathbf{v}}_h,\overline{\omega})=(\overline u, \overline v, \overline\omega),
$$
where we note that the vertical velocity $\omega=\tfrac{dp}{dt}$ in pressure coordinates takes the inverse sign in comparison to cartesian coordinates for upward and downward motion. Also the horizontal and the vertical derivatives  and accordingly the velocity components in pressure coordinates have different units. The total derivative  in pressure coordinates reads
\beq\label{tot.p}
\frac{d}{dt} = \pa_t + \overline{\mathbf{v}}_h\cdot \nabla_h + \overline{\omega}\pa_p \,, \qquad \textnormal{with} \quad \nabla_h =(\pa_x,\pa_y)\,.
\eeq
For the closure of the turbulent and molecular transport we use
\beq\label{tot.diff}
{\cal D}^*  =  \mu_*\Delta_h+ \nu_*\pa_p\left(\left(\frac{g p }{R_d \bar T}\right)^2\pa_p\right)\,,\qquad \textnormal{with} \quad \Delta_h=\pa_x^2+\pa_y^2\,,
\eeq
where $\bar T=\bar T(p)$ corresponds to some background distribution being uniformly bounded from above and below and $R_d$ is the individual gas constant for dry air. The operator ${\cal D}^*$  thereby provides a close approximation to the full Laplacian in cartesian coordinates, see also \cite{LTW,PTZ}.
The thermodynamic quanitities are related via
 the ideal gas law
\beq\label{ideal0}
p=p_d + p_v = R_d \r_d T + R_v \r_v T\,,
\eeq
where $R_v$ is the individual gas constant for water vapor and $p_d, p_v, \r_d, \r_v$ denote the partial pressures and
densities of dry air and water vapor, where we note that liquid water does not exert any pressure on the volume of air, see e.g. also \cite{B, C}.

 Before going more into details with the ideal gas law (see \eqref{ideal} below), we need to introduce the moisture quantities.
In the case of moisture being present typically  the water vapor mixing ratio, defined as the ratio of the density of $\r_v$ over the density
of dry air $\r_d$\,,
$$
q_{v}=\frac{\r_v}{\r_d}\,,
$$
is used for a measure of quantification.
If saturation effects occur, then water is also present in liquid form as cloud water and rain water represented by the additional moisture quantities
$$
q_{c}=\frac{\r_c}{\r_d}\,,\qquad q_{r}=\frac{\r_r}{\r_d}\,.
$$
We focus here on warm clouds, where water is present only in gaseous and liquid form, i.e. no ice and snow phases occur. The total water content
is therefore given by
$$
q_T=q_v+q_c+q_r\,.
$$
For these mixing ratios for water vapor, cloud water and rain water we have the following moisture balances
\beq
\label{eq.qv}\frac{d q_v}{dt}  &=& S_{ev} - S_{cd}+\cal{D}^{q_v} q_v\,,\\
\label{eq.qc}\frac{d q_c}{dt} &=& S_{cd} - S_{ac}- S_{cr}+\cal{D}^{q_c} q_c\,,\\
\label{eq.qr}\frac{d q_r}{dt}+V\pa_p\left(\frac{p}{R_d \bar{T}} q_r\right) &=& S_{ac} + S_{cr}- S_{ev}+\cal{D}^{q_r} q_r\,
\eeq
with $\frac{d}{dt}$ as in \eqref{tot.p} and $\cal{D}^{q_j} $ as in  \eqref{tot.diff}.
Here   $S_{ev}, S_{cd}, S_{ac}, S_{cr}$ are the rates of evaporation of rain water, the condensation of water vapor to cloud water and
the inverse evaporation process, the auto-conversion of cloud water into rainwater by accumulation of microscopic droplets,
and the collection of cloud water by falling rain. Moreover $V$ denotes the terminal velocity of falling rain and is assumed to be constant.

Having introduced the mixing ratios, we can now reformulate the ideal gas law \eqref{ideal0} as
\beq\label{ideal}
p=\r \widetilde{R} T\,,
\eeq
where $\rho=\rho_d+\rho_v+\rho_c+\rho_r$ is the total density and
$\widetilde{R}$ depends on the moisture content
$$
\widetilde{R}=R_d \frac{1+\frac{q_v}{E}}{1+q_T}\,,\qquad \textnormal{where} \qquad E=\frac{R_d}{R_v}\,,
$$
see e.g.\,also \cite{B,C,HK}.
The thermodynamic equation accounts for the diabatic source and sink terms, such as latent heating, radiation effects, etc., but we will in the following only focus on the effect of latent heat in association with phase
changes (see e.g.\,also \cite{KM,CZF,CZH,HKLT}).
The temperature equation in pressure coordinates then reads, see e.g.\,\cite{HK,C},
\beq
\label{eq.T}\frac{dT}{dt}-\widetilde{\kappa}\frac{T}{p} \overline{\omega} + \frac{c_l}{\widetilde{C}}q_r V \pa_p T=\widetilde{L}(S_{cd}-S_{ev})+ {\cal D}^TT\,,
\eeq
where
$$
\widetilde{\kappa}=\frac{\widetilde{R}}{\widetilde{C}}\,,\qquad \textnormal{and} \qquad \widetilde{C}=c_{pd} + c_{pv} q_v + c_l(q_c+q_r)
$$
with the heat capacities $c_{pd},c_{pv}$ at constant pressure for dry air and water vapor and the heat capacity for liquid water $c_l$, respectively.  For the latent heat term, we denote
\beq\label{L}
\widetilde{L}=\frac{L}{\widetilde{C}}\,,\qquad \textnormal{where}\qquad L(T)= L_0 - (c_{l}-c_{pv})(T-T_0) \qquad \textnormal{and} \qquad L_0=L(T_0),
\eeq
where $T_0$ is the reference temperature which is typically chosen as $T_0=273.15K$. We emphasize here once more that so far only models with $L, \widetilde{R}, \widetilde{\kappa}, \widetilde{C}$ constant and $c_l=0$ have been considered in mathematical analysis studies.
Thus this physically more refined sestting has several extensions in comparison to existing studies.
 \begin{rem}
To describe the state of the atmosphere a common thermodynamic quantity used instead of the temperature is the potential temperature
\begin{equation}\label{PT}
\theta = T \left(\frac{p_0}{p}\right)^\kappa\,, \qquad \textnormal{where} \qquad \kappa = \frac{R_d}{c_{pd}}\,.
\end{equation}
In case of the typical simplification  $\widetilde{\kappa}=\kappa$ and $c_l=0$,  the left hand side of \eqref{eq.T} simply reduces to $\frac{T}{\theta}\frac{d}{dt}\theta$. This property was essential in the preceding works \cite{HKLT} and also \cite{CZT} to derive a priori nonnegativity of the moisture quantities and temperature.
\end{rem}

\subsection{Explicit expressions for the source terms}

The saturation mixing ratio
\[q_{vs}=\frac{\r_{vs}}{\r_d}\,,\]
gives the threshold for saturation, i.e.\,$q_v<q_{vs}$ for undersaturation, $q_v = q_{vs}$ corresponds to saturation,
and  $q_v>q_{vs}$ accordingly holds in oversaturated regions. The saturation vapor mixing ratio satisfies
$$
q_{vs}(p, T)=\frac{E e_s(T)}{p-e_s(T)},
$$
with the saturation vapor pressure $e_s$ as a function of $T$ being defined by the Clausius-Clapeyron equation:
$$
\frac{d \ln e_s}{dT}=\frac{L(T)}{R_v T^2}\,.
$$
From this formula it is obvious that $e_s$ increases in $T$ (as long as $L(T)$ is positive).
Since the temperature $T$ is given in Kelvin $K$, only positive values are physical.
It should be noted that the Clausius-Clapeyron equation is only meaningful
for temperature ranges appearing in the troposphere, thus in particular
we shall pose in the following the natural assumption
$$
e_s(T)=0 \, \textit{Pa}\quad \textnormal{and} \quad q_{vs}(p,T)=0 \quad \textnormal{for} \  T \leq \underline{T}\,,
$$
for some $\underline{T}\geq 0\, \textit{K}$, which will also be helpful for proving nonnegativity of the moisture quantities and the temperature, see also \cite{HKLT}.

Recalling the fact that $c_l-c_{pv}>0$, we see from \eqref{L} that $L(T)$ decreases in $T$. In particular, there exists the critical temperature
\beq
\label{Tcrit}
T_{crit}=\frac{L_0}{(c_l-c_{pv})}-T_0
\eeq
at which the latent heat of evaporation vanishes, i.e.\,$L(T_{crit})=0$. At such high temperatures of about $700 K$, the gaseous and liquid state become indistinguishable. Such temperatures however clearly exceed by far the ones present in the relevant atmospheric layers. Therefore, we in the following pose the natural assumption that
\beq
\label{cond.qvs.up}
e_s(T)=0 \, \textit{Pa}\quad \textnormal{and} \quad q_{vs}(p,T)=0 \quad \textnormal{for} \  T \geq T_{crit}\,.
\eeq
For deriving the uniqueness of the solutions we need additionally the uniform Lipschitz continuity of $q_{vs}\geq 0$ in $T$, i.e.\,we assume
$$
|q_{vs}(p, T_1)-q_{vs}(p, T_2)|\leq C|T_1-T_2|,
$$
for a positive constant $C$ independent of $p$.

For the source terms of the mixing ratios,
we take over the setting of Klein and Majda \cite{KM} corresponding to a basic form of
the bulk microphysics closure in the spirit of Kessler \cite{Ke} and Grabowski and
Smolarkiewicz \cite{GS}, which has also been used in the preceding work \cite{HKLT}:
\begin{eqnarray*}
\label{Sev}S_{ev}&=&C_{ev}\widetilde{R}T (q_r^+)^\beta(q_{vs}-q_v)^+\,,\qquad \\
\label{Scr}S_{cr}&=&C_{cr} q_c q_r,\qquad \\
\label{Sac}S_{ac}&=&C_{ac} (q_c-q_{ac}^*)^+
\end{eqnarray*}
where $C_{ev},C_{cr},C_{ac}$ are dimensionless rate constants. Moreover, $(g)^+=\max\{0,g\}$
and $q_{ac}^*\geq0$ denotes the threshold for cloud water mixing ratio beyond which autoconversion of cloud water into precipitation become active.
The cutoff of the negative part in $q_r$ is only technical since clearly only nonnegative values for
$T$ and $q_j$ for $j\in\{v,c,r\}$ are meaningful.

The exponent $\beta$ in the evaporation term $S_{ev}$ in the literature typically appears to be chosen as
$\beta \approx 0.5$, see e.g.\,\cite{GS,KM} and the references therein.
Exponent $\beta \in (0,1)$ causes difficulties in the analysis for the uniqueness of the solutions. In the case that both $\widetilde C$ and $\widetilde R$ are constants, this problem, however, was overcome in \cite{HKLT} by introducing new unknowns, which allow for certain cancellation properties of the source terms and reveal advantageous monotonicity properties. Here, however, we need to generalise the setting to incorporate the more complicated structure of the thermodynamic equation and in particular the nonconstant $\widetilde C, \widetilde{L}$.

We shall use the closure of the condensation term in a similar fashion to \cite{KM}
$$\label{Scd}
S_{cd}=C_{cd}(q_v-q_{vs})q_c  +C_{cn}(q_v-q_{vs})^+\,,
$$
which is in the literature often defined implicitly via the equation of water vapor at saturation, see e.g.\,\cite{GS}.


\section{Formulation of the problem and main result}\label{sec.main}

We analyse the moisture model consisting of the moisture equations \eqref{eq.qv}--\eqref{eq.qr} coupled to the thermodynamic equation \eqref{eq.T}. As in \cite{HKLT}, we assume the velocity field $\bar{\mathbf{v}}=(\overline{\mathbf{v}}_h,\overline\omega)$ to be given and to satisfy
\begin{eqnarray}\label{reg.v}
  \overline{\mathbf{v}}_h \in (L^2_{\text{loc}}([0,\infty);H^1(\M)))^2\cap (L_{\text{loc}}^\infty([0,\infty);L^2(\M)))^2\cap (L^r_{\text{loc}}([0,\infty);L^q(\M)))^2,\\
  \overline\omega \in  L_{\text{loc}}^\infty([0,\infty);L^2(\M)) \cap L^r_{\text{loc}}([0,\infty);L^q(\M)),
\end{eqnarray}
for some $2\leq r\leq \infty$ and $3\leq q \leq \infty$ satisfying
$\frac{2}{r}+\frac{3}{q}<1$.
Moreover, we assume mass conservation, taking in pressure coordinates the form of the incompressibility condition
\beq\label{inc}
\nabla_h\cdot\overline{\mathbf{v}}_h+\partial_p\overline\omega =0 \qquad \textnormal{in} \ \ \M\,,
\eeq
and the no-penetration boundary condition
\beq\label{noflux.v}
\overline{\mathbf{v}}_h\cdot \mathbf{n}_h + \overline\omega\, n_p=0\,\qquad \textnormal{on} \ \ \pa\M\,.
\eeq
This is motivated from the solution of the viscous primitive equations (without moisture) satisfying these required regularity properties in \eqref{reg.v}, see \cite{CLT3,CLT,CLT1,CLT2,CT}.

Similar as in  \cite{CZF,HKLT}, we let ${\M}$ be a cylinder of the form
\beqs
{\M}=\{(x,y,p): (x,y) \in {\M}', p\in (p_1,p_0)\}\,,
\eeqs
where ${\M}'$ is a smooth bounded domain in $\mathbb{R}^2$ and $p_0>p_1>0$. The boundary is given by
\beqs
&&\Gamma_0=\{(x,y,p)\in \overline{{\M}}: p=p_0\}\,,  \\
&&\Gamma_1=\{(x,y,p)\in \overline{{\M}}: p=p_1\}\,,\\
&&\Gamma_{\ll} = \{(x,y,p)\in \overline{{\M}}: (x,y)\in \pa {\M}', p_0\geq p\geq p_1\}\,.
\eeqs
The boundary conditions read as
\beq
\label{bound.0}&\Gamma_0:\ 
&  \pa_pT=\alpha_{0T}(T_{b 0}(x,y,t)-T)\,,\quad \pa_pq_{j}=\alpha_{0j} (q_{b 0 j}(x,y,t)-q_j) \,, \quad j\in\{v,c,r\}\,,\qquad\\
&\Gamma_1:\  
&\pa_pT=0\,,\qquad \pa_pq_j=0\,,\quad j\in \{v,c,r\}\,, \\
\label{bound.ll}&\Gamma_{\ll}:\ 
& \pa_nT=\alpha_{{\ll}T}(T_{b \ll}(p,t)-T)\,,\quad \pa_nq_{j}=\alpha_{{\ll} j} (q_{b {\ll} j}(p,t)-q_j) \,, \quad j\in\{v,c,r\},
\eeq
where all given functions $\alpha_{0 j }, \alpha_{\ll j}, \alpha_{0 T }, \alpha_{\ll T}$ and
$T_{b 0 }, T_{b \ll},  q_{b 0 j }, q_{b \ll j }$ are assumed to be nonnegative, sufficiently smooth and uniformly bounded.

Throughout this paper, we use the abbreviation
\[\|f\|=\|f\|_{L^2(\M)}\,,\qquad \|f\|_{L^p}=\|f\|_{L^p(\M)}\,.\]
According to the weight in the vertical diffusion terms, we also  introduce the weighted norms
\[\|f\|_w=\Big\|\left(\frac{g p}{R_d\bar T}\right) f\Big\|\,,\qquad \|f\|^2_{H_w^1}=\|f\|^2+\|\nabla_h f\|^2+\|\pa_p f\|_w^2\,,\]
where we emphasize that, since the weight $\frac{gp}{R_d\bar T}$ is uniformly bounded from above and below by positive constants, the $H_w^1(\M)$-norm is equivalent to the $H^1(\M)$-norm.
Moreover, we shall often use for convenience the notation
\[\|(f_1,\dots,f_n)\|^2=\sum_{j=1}^n\|f_j\|^2.\]

For the initial data space of functions for the moisture components and the temperature,
we introduce
$$
\mathcal{X}=L^\infty(\M)\cap H^1(\M)
$$
and accordingly for strong solutions
$$
\mathcal{Y}_\T=\{f|f\in L^\infty(\M\times (0,\T))\cap L^2(0,\T;H^2(\M)),\partial_tf\in L^2(\mathcal M\times(0,\mathcal T))\}.
$$

The main result of this paper is the following theorem.

\begin{thm}
\label{thm}
Let $\beta=1$ and assume that the given velocity field $(\overline{\mathbf{v}}_h, \overline\omega)$ satisfies (\ref{reg.v})--(\ref{noflux.v}) and the initial data $(T_0,q_{v0},q_{c0},$ $q_{r0}) \in \mathcal{X}^4$ is nonnegative.
 Then, for any $\T>0$ there exists a unique global strong solution $( T,q_{v},q_{c},q_{r}) \in \mathcal{Y}_\T^4$ to system \eqref{eq.qv}--\eqref{eq.T}, subject to \eqref{bound.0}--\eqref{bound.ll}, on $\mathcal M\times(0,\mathcal T)$, and the solution components $(T,q_{v},q_{c},q_{r})$ remain nonnegative and uniformly bounded from above  with bounds growing with $\T$.
\end{thm}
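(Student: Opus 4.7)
My plan is to construct the strong solution via a Galerkin or regularization scheme, establish a priori estimates that are uniform in the approximation parameter, pass to the limit, and finally derive uniqueness by a direct energy estimate on the difference of two solutions. The difficulty lies entirely on the a priori side: once nonnegativity together with $L^\infty$ bounds on all four components are available, the equations become semilinear with bounded coefficients and standard parabolic theory yields the higher regularity and global existence in $\mathcal Y_\T^4$.

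For nonnegativity, I would test each moisture equation against the negative part of its unknown and use that $S_{ev}, S_{ac}, S_{cr}$ and the two pieces of $S_{cd}$ either contain explicit cutoffs $(\cdot)^+$ or are proportional to a factor that vanishes at the zero level, so that the negative-part energies satisfy a Gronwall-closable inequality; the Robin boundary contributions give favorable signs thanks to the nonnegativity of the $\alpha$-coefficients. Nonnegativity of $T$ follows analogously by testing the temperature equation with $(T-\underline T)^-$ and using that $q_{vs}(p,T) = 0$ for $T \leq \underline T$, which switches off the latent-heat source in the dangerous regime.

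The main obstacle will be deriving a uniform upper bound for $T$. In the simplified model of \cite{HKLT} the substitution $T = \theta(p_0/p)^\kappa$ eliminates the adiabatic term, but here $\widetilde\kappa = \widetilde R/\widetilde C$ depends on the moisture components, so the change of unknown leaves a residual antidissipative contribution coupled to $q_v, q_c, q_r$. Following the iterative strategy of Coti Zelati et al.\ \cite{CZH}, I would first bound the total water $q_T = q_v + q_c + q_r$: adding the three moisture equations makes $S_{ev}, S_{cd}, S_{ac}, S_{cr}$ cancel telescopically, leaving an advection-diffusion equation for $q_T$ (with the $q_r$-sedimentation), and a maximum-principle or Moser-iteration argument yields an $L^\infty$ bound. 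With $q_T$ controlled, $\widetilde C, \widetilde R, \widetilde\kappa, \widetilde L$ are uniformly bounded, and I would then run a Moser/De Giorgi iteration on $T$, testing the temperature equation with powers of $(T-k)^+$. The cutoff \eqref{cond.qvs.up} kills the $S_{cd}$ and $S_{ev}$ contributions above $T_{crit}$, so the latent-heat source is a lower-order perturbation; the sedimentation term $(c_l/\widetilde C)q_rV\pa_p T$ is handled by integration by parts using the $L^\infty$ bound on $q_r$, while the adiabatic term $-\widetilde\kappa(T/p)\overline\omega$ is absorbed via the integrability $\overline\omega \in L^r_tL^q_x$ with $2/r+3/q<1$ combined with Sobolev embedding.

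Once the $L^\infty$ bounds are in hand, $H^1$ estimates follow by applying $\nabla_h$ and $\pa_p$ to the equations and testing with natural multipliers, and $H^2$ plus $\pa_t \in L^2$ bounds come from viewing each equation as a linear parabolic problem with an $L^2$ right-hand side. For uniqueness I would subtract two solutions with identical data and run an $L^2$ energy estimate on $(\delta T, \delta q_v, \delta q_c, \delta q_r)$. The hypothesis $\beta=1$ is essential because it makes $S_{ev}$ Lipschitz in $q_r$; the uniform Lipschitz continuity of $q_{vs}$ in $T$ controls the $S_{cd}$ coupling, and the moisture-dependence of $\widetilde R, \widetilde C, \widetilde L, \widetilde\kappa$ is absorbed via the already-established $L^\infty$ bounds, which render these coefficients globally Lipschitz in the unknowns on the relevant range.
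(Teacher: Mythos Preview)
Your overall architecture matches the paper's: first obtain nonnegativity and $L^\infty$ bounds on the moisture components so that $\widetilde R,\widetilde C,\widetilde\kappa,\widetilde L$ become bounded, then nonnegativity of $T$ by testing with $-T^-$, then an $L^\infty$ bound on $T$ by a De Giorgi iteration in the spirit of \cite{CZH}, then $H^1$--$H^2$ estimates, with uniqueness coming from Lipschitz dependence of the sources (which indeed needs $\beta=1$). The paper organizes this around a modified system in which negative parts are cut off in the coefficients and sources; local existence and uniqueness for that system follow from a fixed-point argument, and global existence is then a continuation argument showing the $H^1$ norm cannot blow up in finite time. Your Galerkin route is a legitimate alternative. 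One minor point: for the De Giorgi step on $T$ the paper uses only $\overline\omega\in L^\infty_tL^2_x$ (via a Gagliardo--Nirenberg interpolation), reserving the $L^r_tL^q_x$ hypothesis with $\tfrac{2}{r}+\tfrac{3}{q}<1$ for the $H^1$ estimate; invoking the latter already at the $L^\infty$ stage is heavier than needed but harmless.

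There is, however, a genuine gap in your plan for the moisture $L^\infty$ bound. Adding the three moisture equations does make $S_{ev},S_{cd},S_{ac},S_{cr}$ cancel, but the diffusion operators $\mathcal D^{q_v},\mathcal D^{q_c},\mathcal D^{q_r}$ carry in general distinct coefficients $\mu_{q_j},\nu_{q_j}$, so the summed equation is \emph{not} of the form $\partial_tq_T+\text{advection}=\mathcal D q_T+\text{lower order}$. Testing with $(q_T-k)^+$ or with $q_T^{m-1}$ produces cross-terms $\int q_T^{m-2}\,\nabla q_j\cdot\nabla q_T$ with no sign, and neither a maximum principle nor a Moser iteration closes on $q_T$ alone; the sedimentation term, which involves $q_r$ rather than $q_T$, is a further (though lesser) obstruction. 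The paper avoids this by bounding $q_v,q_c,q_r$ \emph{sequentially}, following \cite{HKLT}: $q_v$ has the direct cap $q_v^*=\max\{\text{data},\,q_{vs}^*\}$ because $S_{ev}-S_{cd}\le 0$ whenever $q_v\ge q_{vs}$; with $q_v$ bounded, $S_{cd}$ becomes a bounded source and a Stampacchia/Moser argument bounds $q_c$; with $q_c$ bounded, $S_{ac}+S_{cr}$ is bounded and $q_r$ follows. This sequential route also renders the moisture bounds independent of the velocity regularity, a structural point the paper exploits downstream.
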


The proof of this theorem will be presented in the next section.

\begin{rem}\label{REMARK}
In \cite{HKLT}, we treated the more complicated case of an evaporation source with a general exponent $\beta\in(0,1]$ of $q_r$ that causes in particular difficulties in the uniqueness. To overcome this problem, we introduced the new unknowns $Q=q_v+q_r$ and $H=T-\widetilde{L} (q_c+q_r)$ in \cite{HKLT},  where we recall that $\widetilde{L}$ was assumed to be constant there. Due to the challenge here of treating the additional terms arising from the refined thermodynamics, we stick to the case corresponding $\beta=1$ here and leave the more general case $\beta\in (0,1]$ for future work.
\end{rem}

Throughout this paper, we use $C$ to denote a general positive constant which may be different at different places.
For the aim of the future studies on the coupled system of the moisture dynamics investigated in the present paper
to the primitive equations with either isotropic or anisotropic dissipations,
see \cite{CLT3,CLT,CLT1,CLT2,CT}, the dependence
of the constant $C$ on the a priori bounds of the given velocity field will be explicitly pointed out
at the relevant places. However, the dependence of $C$ on the initial data or the parameters in the system will not
be paid attention to. We will also use $C_k, k\in\mathbb N$, to denote constants having relevant units.

\section{Proof of Theorem \ref{thm}}

\label{sec.apx}
As a start point, we consider the following modified system, which however
is equivalent to the original system \eqref{eq.qv}--\eqref{eq.T} for nonnegative solutions:
\begin{eqnarray}
&&\pa_tT+(\overline{\mathbf{v}}_h\cdot\nabla_h)T+\overline\omega\partial_pT-\widetilde{\kappa}^+\frac{T}{p} \overline\omega+\frac{c_lq_r}{\widetilde{C}^+}V\pa_pT=\widetilde{L}^+(S_{cd}^+-S_{ev}^+)+ {\cal D}^TT,\label{AT}\\
&&\partial_tq_v+\overline{\mathbf{v}}_h\cdot\nabla_hq_v+\overline\omega\partial_pq_v  =
S_{ev}^+
- S_{cd}^++\cal{D}^{q_v} q_v\,,\label{Aqv}\\
&&\partial_tq_c+\overline{\mathbf{v}}_h\cdot\nabla_hq_c+\overline\omega\partial_pq_c= S_{cd}^+ -S_{ac}^+- S_{cr}^++\cal{D}^{q_c} q_c\,,\label{Aqc}\\
&&\partial_tq_r+\overline{\mathbf{v}}_h\cdot\nabla_hq_r+\overline\omega\partial_pq_r+V\pa_p
\left(\frac{p}{R_d\bar{T}} q_r\right) = S_{ac}^+ + S_{cr}^+-
S_{ev}^++\cal{D}^{q_r} q_r\,,\label{Aqr}
\end{eqnarray}
where
\begin{eqnarray*}
  &{\widetilde\kappa}^+=\frac{{\widetilde R}^+}{{\widetilde C}^+}, \quad{\widetilde L}^+=\frac{L(T)}{\widetilde C^+},\quad \widetilde{R}^+=\frac{ R_d+R_vq_v^+}{1+q_v^++q_c^++q_r^+},\quad\widetilde C^+=c_{pd}+c_{pv}q_c^++c_l(q_c^++q_r^+),\\
&S_{ev}^+=C_{ev}  \widetilde{R}^+ T^+q_r^+(q_{vs}(p,T)-q_v)^+,\quad S_{cr}^+=C_{cr}q_c^+q_r^+,\quad S_{ac}^+=S_{ac}=C_{ac}(q_c-q_{ac}^*)^+,\\
&S_{cd}^+=C_{cd}(q_v^+-q_{vs}(p,T))q_c^++C_{cn}(q_v-q_{vs}(p,T))^+,
\end{eqnarray*}
with $L(T)$ given by (\ref{L}).

Since all the nonlinear terms $S_{ev}^+, S_{cr}^+,
S_{ac}^+, S_{cd}^+$ and all the coefficients $\widetilde \kappa^+, \frac1{\widetilde C^+}, \widetilde L^+$ are Lipschitz with respect to $q_v, q_c, q_r,$ and $T$, the local existence of strong solutions to the initial boundary value problem of system (\ref{AT})--(\ref{Aqr}) follows by the standard fixed point arguments.
In fact, by following the proof in \cite{HKLT}, we can prove the following proposition on the local existence and uniqueness.

\begin{prop}
  \label{Aloc}
Assume that the given velocity field $(\overline{\mathbf{v}}_h, \overline\omega)$ satisfies (\ref{reg.v})--(\ref{noflux.v}) and the initial data $(T_0,q_{v0},q_{c0},$ $q_{r0}) \in \mathcal{X}^4$ is nonnegative. Then,
there exists a positive time $\mathcal T_0$ depending only on
the upper bound of $\|(T_0, q_{v0}, q_{c0}, q_{r0})\|_{H^1(\mathcal M)}$,
such that system (\ref{AT})--(\ref{Aqr}), subject to
(\ref{bound.0})--(\ref{bound.ll}), on $\mathcal M\times(0,\mathcal T_0)$, has a unique strong solution
$( T,q_{v},q_{c},q_{r}) \in \mathcal{Y}_{\T_0}^4$.
\end{prop}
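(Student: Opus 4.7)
The plan is to adapt the fixed-point construction from \cite{HKLT} to the modified system (\ref{AT})--(\ref{Aqr}). The crucial feature making this routine is that the positive-part cutoffs render every coefficient globally Lipschitz continuous in $(T, q_v, q_c, q_r)$: indeed $\widetilde C^+ \ge c_{pd} > 0$ uniformly, so $1/\widetilde C^+$ is bounded and Lipschitz; $\widetilde R^+$ is trivially bounded and Lipschitz; the affine $L(T)$ together with the preceding facts make $\widetilde L^+$ and $\widetilde\kappa^+$ bounded and Lipschitz; and $q_{vs}(p,T)$ is uniformly Lipschitz in $T$ by the assumption stated after (\ref{cond.qvs.up}). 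Consequently the source terms $S_{ev}^+, S_{cd}^+, S_{ac}^+, S_{cr}^+$ and all variable coefficients depend Lipschitz-continuously on the unknowns.

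I would first define a solution map $\Phi : (\widetilde T, \widetilde q_v, \widetilde q_c, \widetilde q_r) \mapsto (T, q_v, q_c, q_r)$ by freezing the nonlinear coefficients and source terms at the tildered quantities and then solving the resulting \emph{linear} parabolic initial--boundary value problem for each of the four equations, subject to (\ref{bound.0})--(\ref{bound.ll}). Each resulting linear equation has the anisotropic operator $\mathcal D^*$ as its principal part and transport coefficients with the regularity (\ref{reg.v}), so a standard Galerkin approximation yields a unique solution with $\mathcal Y_{\mathcal T_0}$ regularity whenever the input lies in the same class.

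The second step is to verify that $\Phi$ maps a closed ball in
$$
\mathcal Z_{\mathcal T_0} := \bigl\{ f : f \in L^\infty(0, \mathcal T_0; H^1(\M)) \cap L^\infty(\M \times (0, \mathcal T_0)),\ \partial_t f \in L^2(\M \times (0, \mathcal T_0)) \bigr\}^4
$$
into itself, and is a contraction in the weaker norm $L^\infty(0, \mathcal T_0; L^2) \cap L^2(0, \mathcal T_0; H^1_w)$. Ball-invariance follows by testing each linear equation against the natural multiplier, using the incompressibility (\ref{inc}) and no-penetration (\ref{noflux.v}) conditions to eliminate the transport terms, controlling the antidissipative contribution $-\widetilde\kappa^+ T \overline\omega / p$ and the additional advection $\tfrac{c_l q_r}{\widetilde C^+} V \partial_p T$ via the integrability of $\overline{\mathbf{v}}$ in (\ref{reg.v}), and closing with Gr\"onwall. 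A Moser/Stampacchia iteration adapted to the weighted vertical diffusion in $\mathcal D^*$, exactly as in \cite{HKLT}, delivers the $L^\infty$ bound on the iterates. Choosing $\mathcal T_0$ small relative to the $H^1$-norm of the initial data makes the ball invariant.

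The main obstacle compared to \cite{HKLT} is that the coefficients $\widetilde\kappa^+, \widetilde L^+, \widetilde C^+$ in the temperature equation are no longer constant, so the contraction estimate for differences $(\delta T, \delta q_j) = (T^1 - T^2, q_j^1 - q_j^2)$ must absorb extra terms such as
$$
\int_{\M} \bigl( \widetilde L^+(\widetilde T^1, \widetilde q^1) - \widetilde L^+(\widetilde T^2, \widetilde q^2) \bigr) (S_{cd}^+ - S_{ev}^+)\, \delta T \, dx,
$$
and analogous expressions involving $\widetilde\kappa^+$ and the new advection coefficient $c_l q_r / \widetilde C^+$. Each is handled by the Lipschitz bounds from the first paragraph together with the uniform $L^\infty$ control from the invariance step, so the two estimates have to be carried out in the right order. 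The fixed point of $\Phi$, together with standard parabolic regularity applied once to upgrade $H^1 \cap L^\infty$ to $L^2(H^2)$ with $\partial_t \in L^2$, yields the asserted strong solution in $\mathcal Y_{\mathcal T_0}^4$, and uniqueness follows directly from the contraction inequality applied to two solutions with the same initial data.
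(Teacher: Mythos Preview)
Your proposal is correct and follows the same route as the paper: the paper's ``proof'' consists of a single sentence noting that the cutoffs make all coefficients $\widetilde\kappa^+, 1/\widetilde C^+, \widetilde L^+$ and source terms $S_{ev}^+, S_{cr}^+, S_{ac}^+, S_{cd}^+$ Lipschitz in $(T,q_v,q_c,q_r)$, and then defers the entire construction to \cite{HKLT} via a standard fixed-point argument. You have simply written out what that deferred argument looks like in this setting, including the new wrinkle of non-constant $\widetilde\kappa^+, \widetilde L^+, c_lq_r/\widetilde C^+$ in the temperature equation; one minor caveat is that $\widetilde L^+ = L(T)/\widetilde C^+$ is only \emph{locally} bounded and Lipschitz since $L(T)$ is affine, but your inclusion of $L^\infty(\M\times(0,\mathcal T_0))$ in the working space $\mathcal Z_{\mathcal T_0}$ handles this.
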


By applying Proposition \ref{Aloc} inductively, one can extend uniquely
the solution $( T, q_v, q_c, q_r)$ obtained there to the maximal time interval $(0,\mathcal T_\text{max})$, where $T_{\text{max}}$ is characterized as
\begin{equation}\label{T_max}
  \limsup_{\mathcal T\rightarrow\mathcal T_{\text{max}}^-}\|( T, q_v, q_c, q_r)\|_{H^1(\mathcal M)}=\infty,\quad \mbox{if } \mathcal T_{\text{max}}<\infty.
\end{equation}
Observe that if $T_\text{max}=\infty$, then Proposition \ref{Aloc} implies Theorem \ref{thm}. Therefore, our aim is to show that $T_\text{max}=\infty$. To this end, we assume by contradiction that $T_\text{max}<\infty$. Due to this fact, the following assumption will be made in the subsequent propositions throughout this section.
\begin{assumption}\label{ass} Let all the assumptions in Proposition \ref{Aloc} hold, and let the solution $(T, q_v, q_c, q_r)$ obtained in Proposition \ref{Aloc} be extended uniquely to the maximal interval of existence $(0,\mathcal T_{\text{max}})$, where $\mathcal T_{\text{max}}<\infty$.
\end{assumption}

The main part of this section is to carry out a series of a priori estimates on $( T, q_v, q_c, q_r)$.

First, the following proposition about the nonnegative and uniform boundedness of the moisture components $q_v, q_c,$ and $q_r$ can be proved by slightly modifying the corresponding proof of Proposition 3.2 in \cite{HKLT}.

\begin{prop}\label{bddq}
Let Assumption \ref{ass} hold, then the solution $(T,q_{v},q_{c},q_{r})$ satisfies
$$
0\leq q_{v}\leq  q_v^*\,,\quad 0\leq q_{c}\leq q_c^*\,,\quad 0\leq q_{r}\leq  q_r^*,
$$
for any $\mathcal T \in (0,\mathcal T_{max})$, where
\begin{eqnarray*}
&&q_v^*=\max\big\{\|q_{v 0}\|_{L^\infty{(\M)}},\|q_{b0v}\|_{L^\infty((0,\T)\times\M')},\|q_{b\ll v}\|_{L^\infty((0,\T)\times\Gamma_\ll)}, q_{vs}^*\big\},\\
&&q_c^*\, =q_c^*\big(\T, \|q_{c 0}\|_{L^\infty{(\M)}},\|q_{b0c}\|_{L^\infty((0,T)\times\M')},\|q_{b\ll c}\|_{L^\infty((0,T)\times\Gamma_\ll)},q_v^*,q_{vs}^*\big)\,,\\
&&q_r^*\, =q_r^*\big(\T, \|q_{r 0}\|_{L^\infty{(\M)}},\|q_{b0r}\|_{L^\infty((0,T)\times\M')},\|q_{b\ll r}\|_{L^\infty((0,T)\times\Gamma_\ll)},q_c^*\big),
\end{eqnarray*}
with $q_{vs}^*=\max q_{vs}$ and moreover $q_c^*$ and $q_r^*$ are continuous in $\mathcal T\in(0,\infty)$.
\end{prop}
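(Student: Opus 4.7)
The strategy is to follow the truncation plus Gronwall scheme of Proposition 3.2 in \cite{HKLT}, exploiting the fact that the modified system \eqref{AT}--\eqref{Aqr} is engineered so that on each offending level set the corresponding source terms carry a favorable sign. \emph{Nonnegativity.} For each $j\in\{v,c,r\}$ I would perform the standard energy estimate on the negative part $q_j^-:=\max\{-q_j,0\}$, obtained by multiplying \eqref{Aqv}--\eqref{Aqr} by $-q_j^-$ and integrating over $\mathcal M$. The advective contribution vanishes after integration by parts by virtue of \eqref{inc} and \eqref{noflux.v}, the diffusion yields the dissipation $\|\nabla_h q_j^-\|^2+\|\partial_p q_j^-\|_w^2$, and the Robin boundary terms carry the correct sign by nonnegativity of the prescribed boundary data. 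On $\{q_v<0\}$ one has $q_v^+=0$ and $(q_v-q_{vs})^+=0$, hence $S_{ev}^+\ge 0$ and $S_{cd}^+=-C_{cd}q_{vs}q_c^+\le 0$; analogous sign checks handle $q_c$ (where $S_{ac}^+,S_{cr}^+$ vanish on $\{q_c<0\}$) and $q_r$, whose falling-rain flux $V\partial_p\bigl(\tfrac{p}{R_d\bar T}q_r\bigr)$ is integrated by parts using the boundary conditions \eqref{bound.0}--\eqref{bound.ll}. A Gronwall argument then forces $q_j^-\equiv 0$.

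\emph{Upper bounds.} These are proved in the cascade $q_v\to q_c\to q_r$, which explains why the thresholds in the statement have nested dependencies. With $M=q_v^*\ge q_{vs}^*$, testing \eqref{Aqv} against $(q_v-M)^+$ one sees that on $\{q_v>M\}\subset\{q_v>q_{vs}\}$ the evaporation source $S_{ev}^+$ vanishes since $(q_{vs}-q_v)^+=0$, while $S_{cd}^+\ge 0$; combined with $M$ dominating the prescribed boundary data, this yields $(q_v-M)^+\equiv 0$. Once $q_v\le q_v^*$ is available, $S_{cd}^+$ in \eqref{Aqc} is controlled by $C_{cd}(q_v^*+q_{vs}^*)q_c+C_{cn}q_v^*$ while $-S_{ac}^+-S_{cr}^+\le 0$, and testing against $(q_c-\lambda(t))^+$ with $\lambda(t)$ a solution of a linear-ODE majorant that dominates the boundary data gives the asserted $q_c^*$, growing exponentially in $\mathcal T$. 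With $q_c\le q_c^*$ in hand, the analogous computation on \eqref{Aqr} yields $q_r^*$; the only additional ingredient is that the falling-rain flux must be integrated by parts, the $\Gamma_1$ boundary term vanishing by $\partial_p q_r|_{\Gamma_1}=0$, the $\Gamma_0$ term being controlled by the Robin condition, and the remaining interior contribution being linear in $q_r$ and thus absorbable into the majorant.

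\emph{Principal technical point.} Compared to \cite{HKLT}, the genuinely new feature at this step is the presence of the variable coefficient $\widetilde{R}^+$ inside $S_{ev}^+$. Crucially, however, $\widetilde{R}^+$ depends only on the nonnegative parts $q_v^+,q_c^+,q_r^+$ and is manifestly bounded above and below by positive constants depending only on $R_d$ and $R_v$; it therefore only affects the constants in the truncation estimates, not their structure, and the scheme of \cite{HKLT} adapts with essentially only notational changes. The continuity of $q_c^*$ and $q_r^*$ in $\mathcal T$ follows from the explicit linear-ODE majorants used at each cascade step, which is the only place where the bounds acquire their explicit $\mathcal T$-dependence.
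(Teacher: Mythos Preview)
Your proposal is correct and matches the paper's own approach, which simply defers to Proposition~3.2 of \cite{HKLT} and notes that the only modification needed is to accommodate the variable coefficient $\widetilde R^{+}$ in $S_{ev}^{+}$. One harmless slip: $\widetilde R^{+}=(R_d+R_vq_v^{+})/(1+q_v^{+}+q_c^{+}+q_r^{+})$ is not bounded \emph{below} by a positive constant independent of the $q_j$ (the denominator can be large), but your argument only ever uses $\widetilde R^{+}\ge 0$ (for the sign of $S_{ev}^{+}$) and the upper bound $\widetilde R^{+}\le R_v$, so nothing is affected.
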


Due to the nonnegativity of $q_v, q_c, q_r$, it is clear that $\widetilde R^+=\widetilde R, \widetilde C^+=\widetilde C, \widetilde\kappa^+=\widetilde\kappa, \widetilde L^+=\widetilde L, S_{cr}^+=S_{cr}, S_{cd}^+=S_{cd}$, $S_{ac}^+=S_{ac}$,
and
\beq\label{bound.kappa}
0< \widetilde{\kappa} = \frac{\widetilde{R}}{\widetilde{C}}  \leq \kappa_1,\quad
0\leq \frac{c_l q_r}{\widetilde{C}}\leq 1,\quad0<\frac{1}{\widetilde C}\leq\frac1{c_{pd}},
\eeq
for some positive constant $\kappa_1$. Besides, by the uniform boundedness of $q_v, q_c, q_r$, one has
\begin{equation}\label{bound.source}
  |S_{cd}|+|S_{cr}|+|S_{ac}|\leq C,
\end{equation}
for some positive constant $C$ depending on $q_v^*, q_c^*, q_r^*$.

\begin{prop}
\label{H1q}
Let Assumption \ref{ass} hold, then for any $\mathcal T\in(0,T_\text{max})$,
\begin{equation*}
\sup_{0\leq t\leq \T} \|(q_v,q_c,q_r)\|^2 + \int_0^{\mathcal T}
\|\nabla (q_v,q_c,q_r)\|^2 dt\leq K_0(\mathcal T),
\end{equation*}
for a continuous bounded function $K_0(\T)$ determined by $q_v^*, q_c^*,$ and $q_r^*$.
\end{prop}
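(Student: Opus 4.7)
The strategy is a straightforward $L^2$ energy estimate for each moisture component: test \eqref{Aqv}, \eqref{Aqc}, \eqref{Aqr} against $q_v,q_c,q_r$ respectively, integrate over $\mathcal M$, sum, and absorb all terms either into the diffusion on the left or into a Gronwall argument. Since by Proposition~\ref{bddq} the components are already uniformly bounded in $L^\infty(\mathcal M\times(0,\mathcal T_{\max}))$, the task reduces to extracting the $L^2_t H^1_x$ bound from the diffusion operator $\mathcal D^{q_j}$.

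The plan for handling each group of terms is as follows. For the transport terms $\overline{\mathbf v}_h\cdot\nabla_h q_j+\overline\omega\,\partial_p q_j$ tested against $q_j$, I would use the incompressibility \eqref{inc} together with the no-penetration condition \eqref{noflux.v} to rewrite them as $\tfrac12\int_{\mathcal M}\overline{\mathbf v}\cdot\nabla(q_j^2)$ and conclude that they vanish. For the diffusion $\mathcal D^{q_j}q_j$, integration by parts produces the dissipation $\mu_{q_j}\|\nabla_h q_j\|^2+\nu_{q_j}\|\partial_p q_j\|_w^2$ plus boundary contributions on $\Gamma_0$ and $\Gamma_\ell$ from the Robin conditions in \eqref{bound.0}--\eqref{bound.ll}; these boundary terms are of the form $\int \alpha(q_b-q_j)q_j$, and because $q_j$ and $q_b$ are uniformly bounded by Proposition~\ref{bddq}, they are controlled by a trace inequality of the form $\|q_j\|_{L^2(\partial\mathcal M)}^2\le \eta\|\nabla q_j\|^2+C_\eta\|q_j\|^2$, where $\eta$ is chosen small enough to be absorbed by the dissipation. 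For the sedimentation term $V\partial_p\bigl(\tfrac{p}{R_d\bar T}q_r\bigr)$ tested against $q_r$, I would integrate by parts in $p$: one part produces $-V\int \tfrac{p}{R_d\bar T}q_r\partial_p q_r$ which by Young's inequality is absorbed into the vertical dissipation up to an $L^2$ remainder of $q_r$, and the boundary terms at $p=p_0,p_1$ are bounded by the $L^\infty$ estimate on $q_r$ after applying a trace inequality on $\mathcal M'$.

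The source terms require a brief check. Proposition~\ref{bddq} combined with \eqref{bound.source} immediately bounds $S_{cd}, S_{cr}, S_{ac}$ uniformly, so their contributions $\int S\, q_j$ are bounded by $C\|q_j\|_{L^1}\le C(1+\|q_j\|^2)$. The evaporation term $S_{ev}^+=C_{ev}\widetilde R\, T^+ q_r^+ (q_{vs}(p,T)-q_v)^+$ is the only one involving $T$, but the cutoff assumptions $q_{vs}(p,T)=0$ for $T\le\underline T$ and $T\ge T_{\rm crit}$ together with the monotonicity of $q_{vs}$ in $T$ ensure that $T^+q_{vs}(p,T)\le T_{\rm crit}\,q_{vs}^*$ pointwise; hence $S_{ev}^+$ is also uniformly bounded, and the same estimate applies.

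Collecting all pieces, one arrives at a differential inequality
\[
\frac{d}{dt}\|(q_v,q_c,q_r)\|^2+c\,\|\nabla(q_v,q_c,q_r)\|^2 \le C\bigl(1+\|(q_v,q_c,q_r)\|^2\bigr),
\]
with $c>0$ and $C$ depending only on $q_v^*,q_c^*,q_r^*$ and the boundary data. Gronwall's inequality over $(0,\mathcal T)$ followed by time integration of the dissipation term yields the stated bound with the continuous function $K_0(\mathcal T)$. The main technical point, rather than any single hard estimate, is the careful handling of the Robin boundary terms and of the sedimentation contribution so that both are safely absorbed into the diffusive dissipation; everything else is a routine consequence of the already-established $L^\infty$ bounds.
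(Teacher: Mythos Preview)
Your proposal is correct and follows essentially the same energy-estimate-plus-Gr\"onwall route as the paper. The only notable differences are cosmetic: the paper handles the Robin boundary contributions by completing the square, $\alpha(q_b-q_j)q_j=-\alpha\bigl[(q_j-\tfrac{q_b}{2})^2-\tfrac{q_b^2}{4}\bigr]$, which bounds them below by a constant depending only on the boundary data (no trace inequality needed), and it drops the term $-S_{ev}^+q_r\le 0$ by sign rather than bounding $S_{ev}^+$ there; your treatment works equally well.
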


\begin{proof}
Testing (\ref{Aqv}), (\ref{Aqc}), and
(\ref{Aqr}), respectively, with $q_v, q_c,$ and $q_r$, summing the resultants up, using the uniform boundedness of
$q_v, q_c, q_r, S_{ac}, S_{cr}, S_{cd}$ (due to Proposition \ref{bddq} and (\ref{bound.source})), and noticing that $S_{ev}^+q_r\geq0$, one deduces
\begin{eqnarray}
 &&\frac12 \frac{d}{dt}\|(q_v, q_c, q_r)\|^2-\sum_{j\in\{v,c,r\}}\int_\M q_j\cal D^{q_j}q_jd\M\nonumber\\
 &=&-\sum_{j\in\{v,c,r\}}\int_\M(\overline{\mathbf{v}}_h\cdot\nabla_hq_j+\overline\omega\partial_pq_j)q_jd-V\int_\M q_r\partial_p\left(\frac{p q_r}{R_d\bar T}\right)d\M\nonumber\\
 && +\int_\M[
 (S_{ev}^+-S_{cd})q_v + (S_{cd}-S_{ac}-S_{cr})q_c + (S_{ac}+S_{cr}-S_{ev}^+)q_r]d\M\nonumber\\
 &\leq&-\sum_{j\in\{v,c,r\}}\int_\M(\overline{\mathbf{v}}_h\cdot\nabla_hq_j+\overline\omega\partial_pq_j)q_jd-V\int_\M q_r\partial_p\left(\frac{p q_r}{R_d\bar T}\right)d\M \nonumber\\
 &&+\int_\M S_{ev}^+ q_vd\M+C.\label{AD1}
\end{eqnarray}

The integrals in (\ref{AD1}) are estimated as follows.
For the diffusion terms, integration
by parts and using the boundary conditions (\ref{bound.0})--(\ref{bound.ll}),
one deduces
\begin{eqnarray*}
&&-\int_\M q_j{\cal D}^{q_j}q_jd\M\nonumber\\
&=&-\int_\M\left[\mu_{q_j}\Delta_hq_j+\nu_{q_j}\partial_p
\left(\left(\frac{gp}{R\bar T}\right)^2\partial_pq_j\right)\right]q_jd\M\nonumber\\
&=&-\mu_{q_j}\int_{\Gamma_\ll}(\partial_nq_j)q_jd\Gamma_\ll-\nu_{q_j}\left.\int_{\M'}
\left(\frac{gp}{R\bar T}\right)^2(\partial_pq_j)q_jd\M'\right|_{p_1}^{p_0}\nonumber\\
&&+\int_\M\left[\mu_{q_j}\nabla_hq_j\cdot\nabla_hq_j+\nu_{q_j}\left(\frac{gp}{R\bar T}\right)^2\pa_pq_j\partial_pq_j\right]d\M\nonumber\\
&=&\mu_{q_j}\|\nabla_hq_j\|^2+\nu_{q_j}\|\partial_pq_j\|_w^2-\mu_{q_j} \int_{\Gamma_\ll}\alpha_{\ll j}(q_{\ll j}-q_j)q_jd\Gamma_\ll\nonumber\\
&&-\nu_{q_j}\int_{\M'}\left(\frac{gp_0}{R\bar T}\right)^2\alpha_{b0j}(q_{b0j}-q_j)q_jd\M'\nonumber\\
&=&\mu_{q_j}\|\nabla_hq_j\|^2+\nu_{q_j}\|\partial_pq_j\|_w^2+\mu_{q_j} \int_{\Gamma_\ll}\alpha_{\ll j}\left[
\left(q_j-\frac{q_{\ll j}}{2}\right)^2-\frac{q_{\ll j}^2}{4}\right]d\Gamma_\ll\nonumber\\
&&+\nu_{q_j}\int_{\M'}\left(\frac{gp_0}{R\bar T}\right)^2\alpha_{b0j}\left[\left(q_j-\frac{q_{b0j}}2\right)^2-\frac{q_{b0j}^2}4\right]d\M',\label{AD2}
\end{eqnarray*}
for $j\in\{v,c,r\}$. This implies for $j\in\{v,c,r\}$ that
\begin{equation}
  -\int_\M q_j{\cal D}^{q_j}q_jd\M
\geq\mu_{q_j}\|\nabla_hq_j^-\|^2+\nu_{q_j}\|\partial_pq_j^-\|_w^2-C, \label{AD3}
\end{equation}
for a constant $C$ depending only on the given inhomogeneous boundary functions $\alpha_{0 j }, \alpha_{\ll j}$ and
$q_{b 0 j }, q_{b \ll j }$.
The integral containing the advection term vanishes due to \eqref{inc} and \eqref{noflux.v}, since
\beq
 &&\int_\M (\overline{\mathbf{v}}_h\cdot\nabla_h q_j + \overline\omega \pa_p q_j) q_jd\M= -\frac12\int_\M (\overline{\mathbf{v}}_h\cdot\nabla_h + \overline\omega \pa_p) (q_j)^2 d\M\nonumber\\
&=& -\frac12\int_{ \pa\M}(\overline{\mathbf{v}}_h\cdot\mathbf{n}_h + \overline\omega n_p)(q_j)^2d(\pa\M)+\frac12\int_{ \M}(q_j)^2(\nabla_h\cdot \overline{\mathbf{v}}_h+ \pa_p\overline\omega )d\M =0\,.\qquad\label{AD4}
\eeq
The Young inequality leads to
\begin{eqnarray}
  -V\int_\M q_r\partial_p\left(\frac{p q_r}{R_d\bar T}\right)d\M
  \leq\frac{\nu_{q_r}}{8}\|\partial_pq_r\|_w^2+C\|q_r\|^2. \label{AD5}
\end{eqnarray}
To estimate the term $\int_\M S_{ev}^+ q_vd\M$, we decompose the domain $\M$ as $\M=\M_+(t)\cup\M_-(t)$, where
$\M_+(t)=\{(x,y,p)\in\M|T(x,y,p,t)\geq T_{\text{crit}}\}$ and $\M_-(t)=\M\setminus\M_+(t).$
Due to (\ref{cond.qvs.up}), one can check that $S_{ev}^+=0$ on $\M_+(t)$, while on $\M_-(t)$, due to the
nonnegativity and uniform boundedness of $q_v, q_c, q_r$ guaranteed by Proposition \ref{bddq}, one has $S_{ev}^+\leq C$.
Therefore, we always have
$$
\label{bound.Sev}
0\leq S_{ev}^+\leq C\quad\mbox{on }\M
$$
and, as result, it holds that $\int_\M S_{ev}^+ q_vd\M\leq C$. Thanks to this and combining (\ref{AD3})--(\ref{AD5}),
the conclusion follows from (\ref{AD1}) by the Gr\"onwall inequality.
\end{proof}

We would like to point out that the a priori estimates obtained
in Proposition \ref{bddq} and Proposition \ref{H1q} do not depend on the a priori bounds of the given velocity $(\overline{\mathbf v}_h, \overline\omega)$.

The following lemma will be used later.

\begin{lem}\label{aux.lem.uni}
	Let $f \in L^\infty( 0,\T;L^2(\M))$ and $g,h \in L^2(0,\T;H^1(\M))\cap L^\infty(0,\T;L^2(\M))$. Then for some arbitrary $\delta_g,\delta_h >0$ at a.e. $t\in (0,\T)$ the following estimate holds
$$
	\left|\int_\M f \,g \,h\, d\M\right|\leq \delta_g \|\nabla g\|^2_{L^2(\M)} + \delta_h \|\nabla h\|^2_{L^2(\M)} +C (\|g\|^2_{L^2(\M)} +\|h\|^2_{L^2(\M)} )\,,
$$
	where $C=C(\delta_1,\delta_2,\|f\|_{L^\infty(0,\T;L^2(\M))})$.
\end{lem}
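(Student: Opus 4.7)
The plan is to combine H\"older's inequality on the three-factor product with the Gagliardo--Nirenberg interpolation inequality on the bounded three-dimensional domain $\M$, and then close the estimate with Young's inequality so that the resulting $H^1$ norms split cleanly into $\delta$-weighted gradient contributions plus lower-order $L^2$ remainders that can be absorbed into the constant.

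The first step is to apply H\"older with exponents $(2,4,4)$ followed by the elementary inequality $ab\leq \tfrac12(a^2+b^2)$:
$$
\left|\int_\M f\,g\,h\, d\M\right| \leq \|f\|_{L^2(\M)}\|g\|_{L^4(\M)}\|h\|_{L^4(\M)} \leq \tfrac12\|f\|_{L^2(\M)}\bigl(\|g\|_{L^4(\M)}^2 + \|h\|_{L^4(\M)}^2\bigr).
$$
Since $\M\subset\R^3$ is a smooth bounded domain, the Gagliardo--Nirenberg inequality (obtained by interpolation from $H^1(\M)\hookrightarrow L^6(\M)$) gives $\|u\|_{L^4(\M)}^2\leq C\|u\|_{L^2(\M)}^{1/2}\|u\|_{H^1(\M)}^{3/2}$ for $u\in H^1(\M)$, with exponent $\theta=3/4$ dictated by scaling in dimension three. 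Applying this separately to $g$ and $h$ yields
$$
\left|\int_\M f\,g\,h\, d\M\right| \leq C\|f\|_{L^2(\M)}\bigl(\|g\|_{L^2(\M)}^{1/2}\|g\|_{H^1(\M)}^{3/2} + \|h\|_{L^2(\M)}^{1/2}\|h\|_{H^1(\M)}^{3/2}\bigr).
$$

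Finally I would apply Young's inequality in the form $AB^{3/2}\leq \eta B^2 + C_\eta A^4$ (conjugate exponents $4/3$ and $4$) separately to each summand, choosing $\eta=\delta_g/C$ in the first and $\eta=\delta_h/C$ in the second. This produces
$$
\left|\int_\M f\,g\,h\, d\M\right| \leq \delta_g\|g\|_{H^1(\M)}^2 + \delta_h\|h\|_{H^1(\M)}^2 + C_0\|f\|_{L^2(\M)}^4\bigl(\|g\|_{L^2(\M)}^2 + \|h\|_{L^2(\M)}^2\bigr),
$$
with $C_0=C_0(\delta_g,\delta_h)$. Expanding $\|u\|_{H^1(\M)}^2 = \|u\|_{L^2(\M)}^2 + \|\nabla u\|_{L^2(\M)}^2$, absorbing the resulting $\delta_g\|g\|_{L^2(\M)}^2$ and $\delta_h\|h\|_{L^2(\M)}^2$ contributions into the $L^2$-remainder coefficient, and using the essential bound $\|f\|_{L^2(\M)}\leq \|f\|_{L^\infty(0,\T;L^2(\M))}$ for a.e.\ $t\in(0,\T)$, one arrives at the stated inequality with $C=C(\delta_g,\delta_h,\|f\|_{L^\infty(0,\T;L^2(\M))})$. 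There is no genuine obstacle here; the only delicate points are that the Gagliardo--Nirenberg exponent is fixed by the dimension and that the symmetric Young step after H\"older is what allows $\delta_g$ and $\delta_h$ to be calibrated independently of one another.
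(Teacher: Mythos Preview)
Your proof is correct and follows essentially the same route as the paper: H\"older with exponents $(2,4,4)$, the symmetric splitting $ab\le\tfrac12(a^2+b^2)$, the three-dimensional Gagliardo--Nirenberg inequality with interpolation exponent $3/4$ on the $H^1$ side, and then Young's inequality with conjugate pair $(4/3,4)$. The only cosmetic difference is that the paper writes the Gagliardo--Nirenberg bound in the form $\|g\|_{L^4}^2\le C\|g\|_{L^2}^{1/2}\|\nabla g\|_{L^2}^{3/2}+\|g\|_{L^2}^2$ (with the additive lower-order term), whereas you use the equivalent $H^1$-norm version and expand afterwards.
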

\begin{proof} We first bound the integral by
$$
	\left|\int_\M f \,g \,h\, d\M\right|\leq \|f\|_{L^2(\M)}\|g\,h \|_{L^2(\M)}\leq C (\|g^2\|_{L^2(\M)}+\|h^2\|_{L^2(\M)})
$$
	where $C$ depends on $\|f\|_{L^\infty(0,\T;L^2(\M))}$. We next employ the Gagliardo Nirenberg inequality, see e.g.\cite[Theorem 10.1]{Fri} and \cite[Theorem 1.1.4]{Zhe}, to estimate
\begin{eqnarray*}
	\|g^2\|_{L^2(\M)}&=&\|g\|^2_{L^4(\M)}\leq C\|g\|^{2\vartheta}_{L^2(\M)}\|\nabla g\|^{2(1-\vartheta)}_{L^2(\M)} + \|g\|^2_{L^2(\M)} \\
	&\leq& \delta_g \|\nabla g\|^2_{L^2(\M)} +  C(\delta_g) \|g\|^2_{L^2(\M)}
\end{eqnarray*}
	which holds for $\vartheta = \frac14$. In the last estimate we used Young's inequality. The same estimate holds for $h$, which concludes the proof.
\end{proof}

Nonnegativity of the temperature is proved in the following proposition.

\begin{prop}\label{nonneg.T}
Let Assumption \ref{ass} hold, then the temperature $T$ is nonnegative.
\end{prop}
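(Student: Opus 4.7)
The plan is to test equation \eqref{AT} with $-T^-$, where $T^-:=\max\{-T,0\}$, integrate over $\M$, and derive a Gronwall-type inequality $\tfrac{d}{dt}\|T^-\|^2 \leq C\|T^-\|^2$. Since the initial data are nonnegative, $T^-(0)=0$, which forces $T^-\equiv 0$ on $(0,\T_{\max})$ and hence $T\geq 0$. The time derivative produces $\tfrac12\tfrac{d}{dt}\|T^-\|^2$ by the standard chain rule for $(T^-)^2/2$; the convective terms vanish by \eqref{inc} and \eqref{noflux.v} exactly as in \eqref{AD4}; and the diffusion, after integration by parts, yields the coercive contribution $\mu_T\|\nabla_h T^-\|^2+\nu_T\|\pa_p T^-\|_w^2$. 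The Robin boundary pieces on $\Gamma_0$ and $\Gamma_\ll$ are sign-favorable: on $\{T<0\}$ the inequalities $T_{b0},T_{b\ll}\geq 0>T$ and $-T^-=T<0$ force $(T_{b0}-T)(-T^-)\leq 0$ and $(T_{b\ll}-T)(-T^-)\leq 0$, so the resulting boundary integrals can be dropped.

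Next I would verify that the latent heat source gives a favorable sign. On $\{T<0\}$ the assumption $q_{vs}(p,T)=0$ for $T\leq \underline T$ (with $\underline T\geq 0$), combined with $T^+=0$, forces $S_{ev}^+=0$. At the same time $S_{cd}^+=C_{cd}q_vq_c+C_{cn}q_v\geq 0$ by Proposition \ref{bddq}, while $L(T)=L_0-(c_l-c_{pv})(T-T_0)\geq L_0+(c_l-c_{pv})T_0>0$ and $1/\widetilde C^+\leq 1/c_{pd}$ give $\widetilde L^+>0$. Hence $\widetilde L^+(S_{cd}^+-S_{ev}^+)(-T^-)\leq 0$ pointwise, and this term is discarded. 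The two bulk terms without a definite sign are the antidissipative one and the sedimentation one. For the antidissipative term, I would bound
\[
\Bigl|\int_\M \widetilde\kappa^+ \tfrac{(T^-)^2}{p}\overline\omega\, d\M\Bigr|\leq \delta\|\nabla T^-\|^2+C\|T^-\|^2
\]
by Lemma \ref{aux.lem.uni} applied with $f=\overline\omega\in L^\infty_{\text{loc}}(0,\T;L^2(\M))$ and $g=h=T^-$, using \eqref{bound.kappa} and the uniform positive lower bound on $p$. For the sedimentation term I would exploit the chain-rule identity $(-T^-)\pa_pT = T^-\pa_p(T^-)$ and the $L^\infty$-bounds on $q_r$ and on $1/\widetilde C^+$ to estimate it by $\delta\|\pa_p T^-\|_w^2+C\|T^-\|^2$ via Young's inequality.

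Choosing $\delta$ small enough to absorb these gradient contributions into the diffusion closes the inequality to $\tfrac{d}{dt}\|T^-\|^2\leq C\|T^-\|^2$, and Gronwall concludes. The main obstacle is precisely the antidissipative term $-\widetilde\kappa\tfrac{T}{p}\overline\omega$: in \cite{HKLT} the change of variables to the potential temperature under the assumption $\widetilde\kappa=\kappa$ constant made this term disappear and rendered nonnegativity transparent, but here the moisture-dependent $\widetilde\kappa$ blocks that cancellation and forces a direct estimate relying on the integrability of $\overline\omega$. A minor technicality is justifying that $-T^-$ is an admissible test function, which follows from the strong-solution regularity $T\in L^2(0,\T;H^2(\M))$, $\pa_t T\in L^2$ via a standard smooth approximation of the negative part.
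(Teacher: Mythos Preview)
Your proposal is correct and follows essentially the same approach as the paper: test \eqref{AT} with $-T^-$, use \eqref{inc}--\eqref{noflux.v} to kill the advection, integrate the diffusion by parts and drop the sign-favorable Robin boundary terms, bound the antidissipative term via Lemma~\ref{aux.lem.uni} with $f=\overline\omega$ and $g=h=T^-$, handle the sedimentation term by Young's inequality, and exploit $q_{vs}=0$ together with Proposition~\ref{bddq} to show the latent-heat contribution has the right sign (your direct observation that $L(T)>0$ on $\{T<0\}$ is a slightly cleaner variant of the paper's splitting of $L(T)$ into constant and linear parts). The Gronwall closure and the remark about the potential-temperature trick from \cite{HKLT} failing here are exactly as in the paper.
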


\begin{proof}
As already mentioned above, for the refined thermodynamic modelling used in this work, the anti-dissipative term $\widetilde{\kappa}^+\frac{T}{p} \overline\omega$ in the thermodynamic equation (\ref{AT}) does not vanish anymore, when switching to the potential temperature $\theta$.
We therefore perform the estimate directly from the equation for $T$.

Testing the equation for $T$ with $-T^-$ yields
\beq
&&\frac12\frac{d}{dt}\int_\M (T^-)^2d\M +\int_\M T^-{\cal D}^{T}Td\M
-\int_\M(\overline{\mathbf{v}}_h\cdot\nabla T+\overline\omega\partial_pT)T^-d\M\nonumber\\
&= &\int_\M \frac{\widetilde{\kappa}}{p}(T^-)^2\overline{\omega} d\M -\int_\M \frac{c_l q_r}{\widetilde{C}} V T^- \pa_p  T^- d\M
-\int_\M \widetilde{L}T^-(S_{cd}-S^+_{ev}) d\M .\label{T.nonneg.est}
\eeq

The various terms in the above equality are estimated as follows.
For the diffusion term, integration
by parts and using the boundary conditions (\ref{bound.0})--(\ref{bound.ll}),
one deduces
\begin{eqnarray*}
\int_\M T^-{\cal D}^{T}Td\M&=&\int_\M\left[\mu_{T}\Delta_hT+\nu_{T}\partial_p
\left(\left(\frac{gp}{R\bar T}\right)^2\partial_pT\right)\right]T^-d\M\nonumber\\
&=&\mu_{T}\int_{\Gamma_\ll}(\partial_nT)T^-d\Gamma_\ll+\nu_{T}\left.\int_{\M'}
\left(\frac{gp}{R\bar T}\right)^2(\partial_pT)T^-d\M'\right|_{p_1}^{p_0}\nonumber\\
&&-\int_\M\left[\mu_{T}\nabla_hT\cdot\nabla_hT^-+\nu_{T}\left(\frac{gp}{R\bar T}\right)^2\pa_pT\partial_pT^-\right]d\M\nonumber\\
&=&\mu_{T}\|\nabla_hT^-\|^2+\nu_{T}\|\partial_pT^-\|_w^2+\mu_{T} \int_{\Gamma_\ll}\alpha_{\ll T}(T_{{b\l}}-T)T^-d\Gamma_\ll\nonumber\\
&&+\nu_{T}\int_{\M'}\left(\frac{gp_0}{R\bar T}\right)^2\alpha_{0T}(T_{b0}-T)T^-d\M'.
\end{eqnarray*}
Since the functions $\alpha_{\ll T}, T_{{b\l}}, \alpha_{0T}, T_{b0}$ are nonnegative and
$T T^- =- ( T^-)^2$, the last two boundary integrals
are nonnegative, and we obtain
\beq\label{est.diff.q}
  \int_\M T^-{\cal D}^{T}Td\M
\geq\mu_{T}\|\nabla_hT^-\|^2+\nu_{T}\|\partial_pT^-\|_w^2.
\eeq
Same as (\ref{AD4}), the integral containing the advection terms vanishes due to \eqref{inc} and \eqref{noflux.v}.
To bound the first term on the right-hand side containing the vertical velocity component,
we apply Lemma \ref{aux.lem.uni} and use (\ref{bound.kappa}) as follows
$$
\left| \int_\M \frac{\widetilde{\kappa}}{p}(T^-)^2\overline{\omega} d\M\right|\leq C \int_\M |\overline{\omega}|(T^-)^2 d\M \leq \frac{\mu_T}{4}\|\nabla_h T^-\|^2 + \frac{\nu_T}{4}\|\pa_pT^-\|^2_w +  C\|T^-\|^2,
$$
where $C$ depends on $\|\overline{\omega}\|_{L^\infty(0,\T;L^2(\M))}$.
Using (\ref{bound.kappa}) again and by the Young inequality, we can estimate the second term as
$$
\left| \int_\M \frac{c_l q_r}{\widetilde{C}} V T^- \pa_p  T^- d\M\right| \leq \frac{\nu_T}{4}\|\pa_p T^-\|_w^2 + C\|T^-\|^2\,.
$$
It remains to bound the integral with the latent heating terms.
\begin{eqnarray*}
&&-\int_\M \widetilde{L}T^-S_{cd} d\M \nonumber\\
&=& - \int \frac{L_0 +(c_l-c_{pv} )T_0}{\widetilde C}[C_{cd} (q_v-q_{vs}) q_c + C_{cn} (q_v-q_{vs})^+]T^- d\M\nonumber\\
&&
- \int_\M \frac{c_l-c_{pv}}{\widetilde{C}} (T^-)^2  [C_{cd} (q_v-q_{vs}) q_c + C_{cn} (q_v-q_{vs})^+] d\M
\leq   0\,,
\end{eqnarray*}
where we used the fact $c_{pv}<c_l$, $q_{vs}=0$ for $T\leq 0$, and the nonnegativity of all moisture quantities.
The integral term with the evaporation term in (\ref{T.nonneg.est}) vanishes since $S_{ev}^+ T^-=C_{ev}\widetilde RT^+q_r(q_{rs}(p,T)-q_v)^+T^-=0$.

Combining all bounds above, we thus obtain from (\ref{T.nonneg.est}) that
$$
\frac{1}{2}\frac{d}{dt} \|T^-\|^2  + \frac{\mu_T}{2}\|\nabla_h T^-\|^2 + \frac{\nu_T}{2}\|\pa_p T^-\|_w^2 \leq C \|T^-\|^2,
$$
and we can conclude by the Gr\"onwall inequality the nonnegativity of $T$ since $T_0^-=0$.
\end{proof}

From now on, the a priori estimates to be carried out depend on the a priori bounds of the given velocity $(\overline{\mathbf{v}}_h,\overline\omega)$. The relevant bounds of the velocity on which the solutions depend will be explicitly pointed out in the statements of the propositions.

\begin{prop}\label{L2T}
Let Assumption \ref{ass} hold, then for any $\T\in(0,\T_\text{max})$
$$
\sup_{0\leq t\leq  \T} \| T \|^2(t)+ \int_0^\T\| \nabla T \|^2(t) dt \leq K_1(\mathcal T),
$$
for a continuous bounded function $K_1(\T)$ determined by the quantities $\|\overline\omega\|_{L^\infty(0,\T;L^2(\M))}$, $q_v^*, q_c^*, q_r^*$.
\end{prop}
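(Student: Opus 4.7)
The plan is to test equation (\ref{AT}) with $T$ itself, integrate over $\M$, and close a differential inequality via Grönwall. Using the local-existence regularity of Proposition \ref{Aloc}, this pairing is legitimate and produces $\frac12\frac{d}{dt}\|T\|^2$ on the left. The advection term $\int_\M(\overline{\mathbf{v}}_h\cdot\nabla_hT+\overline\omega\partial_pT)T\,d\M$ vanishes by the same computation as in (\ref{AD4}), invoking the incompressibility (\ref{inc}) and the no-penetration condition (\ref{noflux.v}). Integration by parts against the dissipation operator $\mathcal D^T$, together with the Robin boundary conditions (\ref{bound.0})--(\ref{bound.ll}), yields the positive contributions $\mu_T\|\nabla_hT\|^2+\nu_T\|\partial_pT\|_w^2$ plus inhomogeneous boundary integrals that, completing the square exactly as in the derivation of (\ref{AD3}), are bounded from below by $-C$ with $C$ depending only on the prescribed boundary data.

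Next I would bound the remaining right-hand side contributions. The sedimentation-like term obeys
$$
\left|\int_\M\frac{c_lq_r}{\widetilde C}VT\,\partial_pT\,d\M\right|\leq \frac{\nu_T}{8}\|\partial_pT\|_w^2+C\|T\|^2,
$$
using $\frac{c_lq_r}{\widetilde C}\leq 1$ from (\ref{bound.kappa}) and Young's inequality. For the latent heat source, the identity (\ref{L}) and $\frac1{\widetilde C}\leq\frac1{c_{pd}}$ give $|\widetilde L|\leq C(1+T)$, while Proposition \ref{bddq} and the discussion leading to (\ref{bound.Sev}) ensure $|S_{cd}|+|S_{ev}^+|\leq C$; hence
$$
\left|\int_\M\widetilde L(S_{cd}-S_{ev}^+)T\,d\M\right|\leq C\int_\M(1+T)|T|\,d\M\leq C(1+\|T\|^2).
$$

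The delicate term, which is precisely what the simplified setting of \cite{HKLT} avoids by passing to the potential temperature, is the anti-dissipative contribution $\int_\M\widetilde\kappa\frac{T^2}{p}\overline\omega\,d\M$. Here I would apply Lemma \ref{aux.lem.uni} with $f=\overline\omega$ and $g=h=T$, together with the bound $\widetilde\kappa/p\leq C$ from (\ref{bound.kappa}); the Gagliardo--Nirenberg ingredient of the lemma converts the cubic integrand into
$$
\left|\int_\M\widetilde\kappa\frac{T^2}{p}\overline\omega\,d\M\right|\leq \frac{\mu_T}{4}\|\nabla_hT\|^2+\frac{\nu_T}{4}\|\partial_pT\|_w^2+C\|T\|^2,
$$
with $C$ depending on $\|\overline\omega\|_{L^\infty(0,\T;L^2(\M))}$. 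This is the first estimate in the argument to inherit dependence on the given velocity, as announced before the statement of the proposition.

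Absorbing the $\|\nabla_hT\|^2$ and $\|\partial_pT\|_w^2$ terms into the left-hand side reduces everything to
$$
\frac{d}{dt}\|T\|^2+\mu_T\|\nabla_hT\|^2+\nu_T\|\partial_pT\|_w^2\leq C(1+\|T\|^2),
$$
from which Grönwall's inequality delivers both the pointwise bound on $\|T\|^2(t)$ and, after integrating the dissipation term, the stated control of $\int_0^\T\|\nabla T\|^2\,dt$, with $K_1(\T)$ continuous in $\T$ and depending on the data, on $q_v^*,q_c^*,q_r^*$, and on $\|\overline\omega\|_{L^\infty(0,\T;L^2(\M))}$. The main obstacle is exactly the anti-dissipative integral, whose handling via Lemma \ref{aux.lem.uni} is the reason the velocity-dependent hypothesis $\overline\omega\in L^\infty(0,\T;L^2(\M))$ enters at this stage.
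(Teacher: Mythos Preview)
Your proposal is correct and follows essentially the same route as the paper: test with $T$, kill the advection via (\ref{AD4}), extract the dissipation via the computation leading to (\ref{AD3}), handle the anti-dissipative term through Lemma \ref{aux.lem.uni}, and close by Gr\"onwall. The only cosmetic difference is in the latent-heat source: the paper bounds $\int_\M\widetilde L S_{cd}T\,d\M$ directly and then drops $-\int_\M\widetilde L S_{ev}T\,d\M$ by the sign observation $\widetilde L S_{ev}\geq 0$ (recorded as (\ref{EQ-1})), whereas you absorb $S_{ev}^+$ together with $S_{cd}$ via the uniform bound $0\leq S_{ev}^+\leq C$ from (\ref{bound.Sev}); both lead to the same $C(1+\|T\|^2)$.
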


\begin{proof}
By testing the temperature equation with $T$ and by the same calculations as (\ref{AD3}) and (\ref{AD4})
to the diffusion terms and the convection terms, we have the following estimate
\begin{eqnarray*}
&&\frac{1}{2}\frac{d}{dt}\|T\|^2 + \mu_T\|\nabla_h T\|^2 +\nu_T\|\pa_pT\|_w^2\nonumber \\
&\leq& \int_\M \frac{\widetilde{\kappa}}{p} \overline{\omega} T^2 d\M
- \int_\M \frac{c_l q_r}{\widetilde{C}}V \pa_p T \, T d\M + \int_\M \widetilde{L}(S_{cd}-S_{ev})  Td\M+C.
\end{eqnarray*}
To bound the first term on the right-hand side, we use Lemma \ref{aux.lem.uni}
and (\ref{bound.kappa}) to get
$$
\int_\M \frac{\widetilde{\kappa}}{p} \overline{\omega} T^2 d\M \leq C \int_\M |\overline{\omega}| T^2 d\M \leq
 \frac{\mu_T}{4}\|\nabla_h T\|^2 +\frac{\nu_T}{4}\|\pa_pT\|_w^2
+ C\|T\|^2,
$$
where $C$ depends on $\|\overline{\omega}\|_{L^\infty(0,\T;L^2(\M))}$. By Young's inequality and using (\ref{bound.kappa}) again,
we obtain
$$
\left|\int_\M \frac{c_l q_r}{\widetilde{C}}V \pa_p T \, T d\M \right| \leq \frac{\nu_T}{4}\|\pa_p T\|^2_w + C\|T\|^2.
$$
Recalling the expression of $L(T)$ given by (\ref{L}), we obtain due to (\ref{bound.kappa}) and (\ref{bound.source}) that
$$
\left|\int_\M \widetilde{L}S_{cd} T d\M\right|=\left|\int_\M \frac{L(T)}{\widetilde C} S_{cd}Td\M\right|\leq C (C_1+\|T\|^2).
$$
Since (\ref{cond.qvs.up}) implies
$S_{ev}=C_{ev}\widetilde RTq_r(q_{vs}(p,T)-q_v)^+=C_{ev}\widetilde RTq_r(-q_v)^+=0$, for $T>T_{\text{crit}}$, while (\ref{L})
and (\ref{Tcrit}) lead to $L(T)\geq0$, for $0\leq T\leq T_{\text{crit}}$, we therefore have
\begin{equation}
  \widetilde L S_{ev}=\frac{L(T)}{\widetilde C}S_{ev}\geq0 \label{EQ-1}
\end{equation}
and thus
$$
-\int_\M \widetilde{L}S_{ev} T d\M \leq 0\,.
$$
Combining all above estimates, we obtain
$$\label{dL2T}
\frac{1}{2}\frac{d}{dt}\|T\|^2 + \mu_T\|\nabla_h T\|^2 +\nu_T\|\pa_p T\|_w^2 \leq C(C_2+\|T\|^2),
$$
leading to the conclusion by the Gr\"onwall inequality.
\end{proof}

Uniform boundedness of $T$ is stated and proved in the following proposition.

\begin{prop}\label{LinftyT}
Let Assumption \ref{ass} hold, then for any $\T\in(0,\T_\text{max})$
$$
\sup_{0\leq t\leq\T}\|T\|_{L^\infty(\M)}(t)\leq K_2(\T),
$$
for a continuous bounded function $K_2(\T)$ determined by the quantities
$\|\overline\omega\|_{L^\infty(0,\T;L^2(\M))},$ $\|T_0\|_{L^\infty{(\M)}},$ $\|T_{b0}\|_{L^\infty((0,T)\times\M')}$, $\|T_{b\ll}\|_{L^\infty((0,\T)\times\Gamma_\ll)}$, $q_v^*$, $q_c^*,$ and $q_r^*$.
\end{prop}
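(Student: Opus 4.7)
The strategy is to promote the $L^2$-in-space bound of Proposition \ref{L2T} to an $L^\infty$ bound by a Moser--Alikakos style $L^m$ iteration, in the spirit of the approach announced in the introduction and used by Coti Zelati et al.\ in \cite{CZH}. The structural obstacle, as pointed out there, is that the antidissipative term $-\widetilde\kappa(T/p)\overline\omega$ in \eqref{AT} does not telescope into the time derivative when switching to the potential temperature (unlike in \cite{HKLT,CZT}), so it must be absorbed directly by the parabolic dissipation. Nonnegativity of $T$, guaranteed by Proposition \ref{nonneg.T}, licenses testing \eqref{AT} with $T^{m-1}$ for every even integer $m\geq 2$.

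Term by term: the advection contributions vanish as in \eqref{AD4} by \eqref{inc} and \eqref{noflux.v}; the dissipation yields $\frac{4(m-1)}{m^2}\bigl(\mu_T\|\nabla_h T^{m/2}\|^2+\nu_T\|\partial_p T^{m/2}\|_w^2\bigr)$, plus inhomogeneous Robin boundary contributions from \eqref{bound.0}--\eqref{bound.ll} that split via the decomposition $T_b T^{m-1}-T^m$ into a sign-favourable $-\int\alpha T^m$ piece and a remainder controlled by trace and Young's inequalities. The antidissipative term $\int \widetilde\kappa\,\overline\omega\, T^m/p\,d\M$ is handled by Lemma \ref{aux.lem.uni} with $g=h=T^{m/2}$ together with the Gagliardo--Nirenberg bound $\|T^{m/2}\|_{L^4}^2\leq C\|\nabla T^{m/2}\|^{3/2}\|T^{m/2}\|^{1/2}+C\|T^{m/2}\|^2$, absorbing $\delta\|\nabla T^{m/2}\|^2$ into the dissipation after Young. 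The sedimentation-like transport $(c_l q_r V/\widetilde C)\partial_p T\cdot T^{m-1}$ is bounded by Young's inequality using $0\leq c_l q_r/\widetilde C\leq 1$ from \eqref{bound.kappa}. For the latent heating, $|\widetilde L\,S_{cd}|\leq C(1+T)$ by \eqref{L}, \eqref{bound.kappa}, \eqref{bound.source}, giving $\bigl|\int\widetilde L\,S_{cd}\,T^{m-1}\,d\M\bigr|\leq C(1+\|T\|_{L^m}^m)$, while $-\int\widetilde L\,S_{ev}^+\,T^{m-1}\,d\M\leq 0$ by \eqref{EQ-1} and $T\geq 0$.

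Collecting these bounds yields, for each even $m\geq 2$, a differential inequality of the form
\begin{equation*}
\frac{d}{dt}\|T\|_{L^m}^m+\frac{c}{m}\bigl(\|\nabla_h T^{m/2}\|^2+\|\partial_p T^{m/2}\|_w^2\bigr)\leq C_m\bigl(1+\|T\|_{L^m}^m\bigr),
\end{equation*}
with $C_m$ polynomial in $m$ and depending on $\|\overline\omega\|_{L^\infty(0,\T;L^2(\M))}$, on the uniform bounds of the moisture components from Proposition \ref{bddq}, on the boundary data, and on $\T$. Combined with the three-dimensional Sobolev embedding $H^1\hookrightarrow L^6$ applied to $T^{m/2}$, which gives $\|T\|_{L^{3m}}^m\leq C(\|\nabla T^{m/2}\|^2+\|T\|_{L^m}^m)$, this sets up a Moser--Alikakos recursion along a geometric sequence $m_k=2^k m_0$; unwinding Gr\"onwall at each step produces $\sup_{[0,\T]}\|T\|_{L^{m_{k+1}}}\leq(Am_k)^{B/m_k}\sup_{[0,\T]}\|T\|_{L^{m_k}}$, and letting $k\to\infty$ closes the $L^\infty$ bound since the product $\prod_k(Am_k)^{B/m_k}$ converges. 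The main technical hurdle I anticipate is the sharp $m$-tracking of $C_m$ (it is precisely the antidissipative term that drives its super-linear growth in $m$) together with a uniform-in-$m$ trace control of the inhomogeneous Robin data; should the latter prove delicate, an equivalent De Giorgi truncation on $(T-M)^+$ with $M\geq\max\{\|T_{b0}\|_{L^\infty},\|T_{b\ll}\|_{L^\infty}\}$, as in \cite{CZH}, would sidestep it.
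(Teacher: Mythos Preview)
Your Moser--Alikakos plan does not close as written. After absorbing $\int_\M(\widetilde\kappa/p)\,\overline\omega\,T^m\,d\M$ via Lemma~\ref{aux.lem.uni} with $g=h=T^{m/2}$, the smallness parameter must beat the dissipation coefficient $4(m-1)/m^2\sim 4/m$, which forces $\delta\sim 1/m$ and hence (through Young with exponents $4/3$ and $4$) a constant $C(\delta)\sim m^3$ on the zero-order side. Gr\"onwall on $\tfrac{d}{dt}\|T\|_{L^m}^m\leq Cm^3(1+\|T\|_{L^m}^m)$ then gives $\sup_t\|T\|_{L^m}^m\leq e^{Cm^3\T}(1+\|T_0\|_{L^m}^m)$, so $\sup_t\|T\|_{L^m}$ carries a factor $e^{Cm^2\T}$ that diverges as $m\to\infty$. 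The linear recursion you claim, $\sup\|T\|_{L^{m_{k+1}}}\leq (Am_k)^{B/m_k}\sup\|T\|_{L^{m_k}}$, does \emph{not} follow: the right-hand side of your differential inequality involves the \emph{same} $L^m$ norm rather than a strictly lower one, so there is no Alikakos-style telescoping. One could interpolate $\|T\|_{L^m}^m$ between $\|T\|_{L^{3m}}^m$ (to be absorbed into the dissipation) and $\|T\|_{L^{m/2}}^m$ (to feed the recursion), producing a \emph{nonlinear} recursion $z_k\lesssim m_k^Kz_{k-1}^2$ that does close after taking $m_k$-th roots---but that is a different argument from the one you outlined, and you correctly flag the $m$-tracking of $C_m$ as the crux.

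The paper goes directly to your fallback, the De Giorgi truncation of \cite{CZH}. Two refinements over your sketch are worth recording. First, the levels $\lambda_k=M(1-2^{-k})$ are taken with $M\geq 2\max\{\|T_0\|_{L^\infty(\M)},T_{\text{crit}},\|T_{b0}\|_{L^\infty},\|T_{b\ll}\|_{L^\infty}\}$, so that on $\{T\geq\lambda_k\}$ one has simultaneously $L(T)\leq0$ and $q_{vs}=0$; this yields the sign $\widetilde L\,S_{cd}\,T_{\lambda_k}\leq0$ outright, sharper than your bound $|\widetilde L S_{cd}|\leq C(1+T)$ (though the latter would also suffice at the cost of an extra harmless linear term). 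Second, after absorbing the quadratic pieces $\int_\M|\overline\omega|T_{\lambda_k}^2$ and $\int_\M|\partial_pT_{\lambda_k}|T_{\lambda_k}$ and applying Gr\"onwall (with $T_{\lambda_k}|_{t=0}=0$), what survives is the single \emph{linear} remainder
\[
J_k\leq C\lambda_k\int_0^\T\!\!\int_\M|\overline\omega|\,T_{\lambda_k}\,d\M\,dt.
\]
Inserting $\chi_{Q_k}\leq(2^k/M)T_{\lambda_{k-1}}$ and using $\|\overline\omega\|_{L^\infty(0,\T;L^2(\M))}$ together with Gagliardo--Nirenberg for $\|T_{\lambda_{k-1}}\|_{L^{14/3}}$ gives the superlinear recursion $J_k\leq C_*M^{-1/3}16^{k/3}J_{k-1}^{7/6}$, from which $J_k\to0$ once $M$ is chosen large relative to the energy bound $K_1(\T)$ of Proposition~\ref{L2T}. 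It is precisely the \emph{linearity} of the surviving term in $T_{\lambda_k}$---unavailable in the $L^m$ framework because of the antidissipative coupling---that makes the De Giorgi iteration close here.
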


\begin{proof}
In \cite{HKLT},
the upper bound for the temperature was derived by employing the potential temperature equation. Again here this does not alleviate the computations due to the stronger coupling of the thermodynamic equation (\ref{eq.T}) to the moisture quantities.
We thus instead apply here the proof of Coti Zelati et al.\,\cite{CZH} based on the De Giorgi technique.

Let $\lambda_k\geq \max\{\|T_0\|_{L^\infty(\M)},T_{\text{crit}},\|T_{b\ll}\|_{L^\infty((0,\T)\times\Gamma_\ll)},
\|T_{b\ll}\|_{L^\infty((0,\T)\times\Gamma_\ll)}\}$, and denote
$T_{\lambda_k} =(T-\lambda_k)^+.$
We claim that
\begin{equation}
  \widetilde LS_{cd}T_{\lambda_k}\leq0.
  \label{EQ-2}
\end{equation}
In fact, if $T<\lambda_k$, then $T_{\lambda_k}=(T-\lambda_k)^+=0$ and, as a result, $\widetilde LS_{cd}T_{\lambda_k}=0$, while if
$T\geq\lambda_k$, it is clear by the definition of $\lambda_k$ that $T\geq T_{\text{crit}}$, and as a result, noticing that in this case $L(T)\leq0$ (due to (\ref{L}) and (\ref{Tcrit})) and $q_{vs}(p,T)=0$ (due to (\ref{cond.qvs.up})),
it holds that
\begin{equation}
\label{EQ-2'}
  \widetilde LS_{cd}T_{\lambda_k}=\frac{L(T)}{\widetilde C}[C_{cd}(q_v-q_{vs})q_c+C_{cn}(q_v-q_{vs})^+]T_{\lambda_k}
  \leq0.
\end{equation}
Testing equation (\ref{eq.T}) with $T_{\lambda_k}$ and by similar calculations as (\ref{est.diff.q}) and (\ref{AD4})
to the diffusion terms and the convection terms, we have the following estimate
\begin{eqnarray}
&&\frac{1}{2}\frac{d}{dt}\|T_{\lambda_k}\|^2 + \mu_T\|\nabla_h T_{\lambda_k}\|^2 +\nu_T\|\pa_p T_{\lambda_k}\|_w^2\nonumber \\
&\leq& \int_\M \frac{\widetilde{\kappa}}{p} \overline{\omega}  TT_{\lambda_k} d\M
- \int_\M \frac{c_l q_r}{\widetilde{C}}V \pa_p T_{\lambda_k} \, T_{\lambda_k} d\M + \int_\M \widetilde{L}(S_{cd}-S_{ev})  T_{\lambda_k}d\M\,\nonumber\\
&\leq& C\int_\M |\overline{\omega}| (T_{\lambda_k}^2 + {\lambda_k} T_{\lambda_k}) d\M+C\int_\M | \pa_p T_{\lambda_k}|T_{\lambda_k} d\M,
\label{61.0}
\end{eqnarray}
where (\ref{bound.kappa}), (\ref{EQ-1}), (\ref{EQ-2}), and (\ref{EQ-2'}) were used in the last step.
By Lemma \ref{aux.lem.uni} and the Young inequality, it follows that
$$
  C\int_\M(|\bar\omega|T_{\lambda_k}^2+|\partial_pT_{\lambda_k}|T_{\lambda_k})d\M
  \leq\frac12\left(\mu_T\|\nabla_hT_{\lambda_k}\|^2+\nu_T\|\partial_pT_{\lambda_k}\|_w^2\right)+C\|T_{\lambda_k}\|^2,
$$
where $C$ depends on $\|\bar\omega\|_{L^\infty(0,\T; L^2(\M))}$,
which substituted into (\ref{61.0}) yields
$$\frac{d}{dt}\|T_{\lambda_k}\|^2 + \mu_T\|\nabla_h T_{\lambda_k}\|^2 +\nu_T\|\pa_p T_{\lambda_k}\|_w^2
\leq C\|T_{\lambda_k}\|^2+C{\lambda_k}\int_\M|\bar\omega|  T_{\lambda_k} d\M.
$$
Due to $T_{\lambda_k}|_{t=0}=0$, we obtain by applying the Gr\"onwall inequality to the above
\begin{eqnarray}
 J_k&:=&\sup_{t\in(0, \T)} \|T_{\lambda_k}\|^2(t)+ \int_0^\T(\mu_T\|\nabla_h T_{\lambda_k}\|^2 +\nu_T\|\pa_p T_{\lambda_k}\|_w^2) dt\nonumber\\
&\leq& C{\lambda_k}\int_0^\T\int_\M|\bar\omega|  T_{\lambda_k} d\M dt.\label{61.2}
\end{eqnarray}

Let $M\geq 2\max\{\|T_0\|_{L^\infty(\M)},T_{\text{crit}},\|T_{b\ll}\|_{L^\infty((0,\T)\times\Gamma_\ll)},
\|T_{b\ll}\|_{L^\infty((0,\T)\times\Gamma_\ll)}\}$ be a positive constant to be determined later and choose
$$
 \lambda_k := M(1-2^{-k}), \quad Q_k:=\left\{(x,t)\in\M\times(0,\T)|T(x,t)>\lambda_k\right\},\quad \textnormal{for} \quad k\geq 1.
$$
For any $(x,t)\in Q_k$, noticing that $T(x,t)>\lambda_k>\lambda_{k-1}$, one deduces
\begin{eqnarray*}
  T_{\lambda_{k-1}}(x,t)=(T-\lambda_{k-1})^+(x,t)=T(x,t)-\lambda_{k-1}>\lambda_k-\lambda_{k-1}=2^{-k}M
\end{eqnarray*}
and, thus,
$$
  \chi_{Q_k}(x,t)\leq\frac{2^k}{M}T_{\lambda_{k-1}}(x,t).\label{61.4}
$$
Thanks to this and noticing that $T_{\lambda_k}\leq T_{\lambda_{k-1}}$ and $\lambda_k\leq M$, one deduces from (\ref{61.2}) and the Gagliardo-Nirenberg inequality that
\begin{eqnarray*}
  J_k&\leq&CM\int_0^{\T}\int_\M|\bar\omega|T_{\lambda_k}d\M dt=CM\int_0^\T\int_\M|\bar\omega|T_{\lambda_k}\chi_{Q_k}^\frac43d\M dt\\
  &\leq&\frac C{M^\frac13}16^{\frac k3}\int_0^\T\int_\M|\bar\omega|T_{\lambda_{k-1}}^\frac73d\M dt\leq \frac C{M^\frac13}16^{\frac k3}\int_0^\T \|\bar\omega\|\|\T_{\lambda_{k-1}}\|_{L^\frac{14}{3}(\M)}^\frac73  dt\\
  &\leq& \frac C{M^\frac13}16^{\frac k3}\int_0^\T \|\T_{\lambda_{k-1}}\|^\frac13\left( \|\T_{\lambda_{k-1}}\|^2+\|\nabla T_{\lambda_{k-1}}\|^2\right) dt\\
  &\leq&\frac{C_*}{M^\frac13}16^\frac k3J_{k-1}^\frac76, \qquad k=2,3,4,\cdots,
\end{eqnarray*}
that is
\begin{equation}
  J_k\leq \frac{C_*}{M^\frac13}16^\frac k3J_{k-1}^\frac76, \qquad k=2,3,4,\cdots,\label{61.5}
\end{equation}
where $C$ depends on $\|\bar\omega\|_{L^\infty(0,\T; L^2(\M))}$.
Setting
$$
  a=64^7C_*^6M^{-2},\quad b=64,\quad S_k=ab^kJ_k,
$$
and by simple calculations, one can check from (\ref{61.5}) that
\begin{equation*}
  S_{k+1}=ab^{k+1}J_{k+1}\leq(ab^kJ_k)^\frac76=S_k^{\frac76}
\end{equation*}
and, thus,
\begin{equation}
  \label{61.6}
64^{k+8}C_*^6M^{-2}J_{k+1}=S_{k+1}\leq S_1^{\left(\frac76\right)^k}=\left[64^8C_*^6M^{-2}J_1\right]^{\left(\frac76\right)^k},\quad k=1,2,\cdots.
\end{equation}
Recalling the definition of $J_k$ and applying Proposition \ref{L2T} lead to
\begin{eqnarray}
  J_1&=&\sup_{t\in(0, \T)} \|T_{\lambda_1}\|^2(t)+ \int_0^\T(\mu_T\|\nabla_h T_{\lambda_1}\|^2 +\nu_T\|\pa_p T_{\lambda_1}\|_w^2) dt\nonumber\\
  &\leq&\sup_{t\in(0, \T)} \|T\|^2(t)+ \int_0^\T(\mu_T\|\nabla_h T\|^2 +\nu_T\|\pa_p T\|_w^2) dt\leq C_{**}. \label{61.6-1}
\end{eqnarray}
Choose $M$ large enough such that $M\geq 2\max\{\|T_0\|_{L^\infty(\M)},T_{\text{crit}},\|T_{b\ll}\|_{L^\infty((0,\T)\times\Gamma_\ll)},
\|T_{b\ll}\|_{L^\infty((0,\T)\times\Gamma_\ll)}\}$ and $64^8C_*^6M^{-2}C_{**}\leq\frac12$. Then, it follows from (\ref{61.6}) and (\ref{61.6-1}) that
\begin{equation*}
  64^{k+8}C_*^6M^{-2}J_{k+1}=S_{k+1}\leq\left(\frac12\right)^{\left(\frac76\right)^k}
\end{equation*}
and, thus,
$\lim_{k\rightarrow\infty}J_k=0.$ This leads to the desired bound $T\leq M$ on $\M\times(0,\T)$.
\end{proof}

\begin{prop}
\label{prop.H1}
If Assumption \ref{ass} holds, we have the estimates
  \begin{eqnarray*}
  \sup_{0\leq t\leq\mathcal T}\|\nabla(q_v,q_c,q_r,T)\|^2(t)
+\int_0^\mathcal T\|\nabla^2(q_v,q_c,q_r,T)\|^2(t)dt\leq K_3(\T),
\end{eqnarray*}
for a continuous bounded function $K_3(\T)$ determined by the quantities
 $q_v^*, q_c^*, q_r^*$, $K_2(\T)$, $\|T_0\|_{L^\infty{(\M)}}$, $\|(T_0, q_{v0}, q_{r0}, q_{c0})\|_{H^1(\M)}$, and $\|(\overline{\mathbf{v}}_h,\overline\omega)\|_{L^r(0,\T; L^q(\M))}.$
\end{prop}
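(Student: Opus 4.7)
The plan is to derive a coupled $H^1$--in--time/$H^2$--in--space energy estimate for all four unknowns simultaneously. Concretely, for each $X \in \{q_v, q_c, q_r, T\}$ I would test the corresponding equation \eqref{Aqv}--\eqref{AT} with $-\mathcal{D}^X X$ and integrate over $\M$. After absorbing the inhomogeneous Robin data by subtracting a fixed smooth extension so that one can legitimately integrate by parts, the diffusion contribution yields $\|\mathcal{D}^X X\|^2$ on the left, which by standard elliptic regularity (the weight $gp/(R_d\bar T)$ is bounded above and below) is equivalent to $\|\nabla^2 X\|^2$ modulo $\|\nabla X\|^2$ and boundary corrections of order one. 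Combined with the time derivative of $\tfrac12\|\nabla X\|^2$, this produces the desired left-hand side of the proposition.

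For the right-hand side I would exploit the bounds already established: by Propositions \ref{bddq}--\ref{LinftyT} the quantities $\widetilde R, \widetilde C, \widetilde L, \widetilde\kappa, 1/\widetilde C$ are uniformly bounded and Lipschitz in $(T,q_v,q_c,q_r)$, and the source terms $S_{cd}, S_{ev}, S_{ac}, S_{cr}$ are uniformly bounded. Thus each product of source/coefficient terms with $\mathcal{D}^X X$ can be controlled in the form
\begin{equation*}
\Big|\int_\M F(T,q_v,q_c,q_r)\, \mathcal{D}^X X\, d\M\Big| \leq \tfrac{\epsilon}{2}\|\mathcal{D}^X X\|^2 + C_\epsilon (1 + \|\nabla(T,q_v,q_c,q_r)\|^2).
\end{equation*}
The antidissipative term $\widetilde\kappa T\overline\omega/p$, the sedimentation term $V\partial_p(p q_r/(R_d\bar T))$, and the cross-transport $(c_lq_r/\widetilde C)V\partial_p T$ are all bounded in $L^2(\M)$ by $\|\overline\omega\|+\|\nabla(T,q_r)\|$ and treated in the same way, and the boundary correction integrals produced at $\Gamma_0$ and $\Gamma_\ell$ are bounded in terms of the smooth, uniformly bounded boundary data.

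The principal obstacle is the transport term $(\overline{\mathbf{v}}_h\cdot\nabla_h + \overline\omega\partial_p)X$ paired with $-\mathcal{D}^X X$, since the given velocity satisfies only the Serrin-type integrability in \eqref{reg.v}. The natural bound is
\begin{equation*}
\Big|\int_\M (\overline{\mathbf{v}}\cdot\nabla X)\, \mathcal{D}^X X\, d\M\Big| \leq \|\overline{\mathbf{v}}\|_{L^q} \|\nabla X\|_{L^{q^*}} \|\mathcal{D}^X X\|,\qquad \tfrac{1}{q^*}=\tfrac12-\tfrac{1}{q},
\end{equation*}
followed by the Gagliardo-Nirenberg inequality $\|\nabla X\|_{L^{q^*}}\leq C\|\nabla X\|^{1-3/q}\|\nabla^2 X\|^{3/q}+C\|\nabla X\|$ and Young's inequality, producing a bound
\begin{equation*}
\tfrac{\epsilon}{2}\|\nabla^2 X\|^2 + C_\epsilon\|\overline{\mathbf{v}}\|_{L^q}^{2/(1-3/q)}\|\nabla X\|^2.
\end{equation*}
A short computation shows the Grönwall-integrability of $\|\overline{\mathbf{v}}\|_{L^q}^{2/(1-3/q)}$ over $(0,\T)$ is exactly the Serrin condition $\frac{2}{r}+\frac{3}{q}<1$ assumed in \eqref{reg.v}.

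Summing the four estimates, choosing $\epsilon$ small enough that all $\epsilon\|\nabla^2 X\|^2$ contributions are absorbed into the left-hand side, and applying the Grönwall inequality then yields the bound with a continuous function $K_3(\T)$ depending on the quantities listed (initial $H^1$ data via the starting energy; $q_v^*,q_c^*,q_r^*,K_2(\T)$ via the pointwise bounds used in the coefficient/source estimates; and $\|(\overline{\mathbf{v}}_h,\overline\omega)\|_{L^r(0,\T;L^q(\M))}$ via the transport estimate). The hardest bookkeeping is making the cross-couplings in the temperature equation (where $\widetilde L, \widetilde\kappa, 1/\widetilde C$ carry all four unknowns) fit into the scheme; but since these coefficients are bounded Lipschitz functions on the $L^\infty$ range already secured, all cross-terms reduce to the model forms treated above.
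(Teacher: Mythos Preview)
Your proposal is correct and follows essentially the same strategy as the paper's proof: derive a differential inequality for $\|\nabla X\|^2$ with $\|\nabla^2 X\|^2$ dissipation by testing each equation with a second-order operator, control the transport term via H\"older--Gagliardo--Nirenberg--Young exploiting the Serrin condition $\frac{2}{r}+\frac{3}{q}<1$ on the velocity (the paper obtains exactly your exponent $\frac{2q}{q-3}$), bound all source and coefficient terms using the uniform pointwise estimates already established, and close by Gr\"onwall and elliptic regularity. The only cosmetic differences are that the paper tests separately with $-\partial_p^2 T$ and $-\Delta_h T$ (rather than with $-\mathcal{D}^X X$) and handles the inhomogeneous Robin data by retaining explicit boundary-energy correction terms in the energy functional (citing \cite{HKLT2}) rather than subtracting a smooth extension.
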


\begin{proof}
We only give the details about the proof for the estimate of $T$, those for the moisture components are similar (actually
simpler).

We first estimate the vertical derivative $\partial_pT$. Multiplying the thermodynamic equation by $-\partial_p^2T$ and integrating the resultant over $\mathcal M$ yields
\begin{eqnarray}
&&\int_\mathcal M(-\partial_tT+\mathcal D^TT)\partial_p^2Td\mathcal M
\nonumber\\
&=&\int_\mathcal M\left[\overline{\mathbf{v}}_h\cdot\nabla_hT+\overline\omega\partial_pT
  -\widetilde \kappa\frac{T}{p}\overline\omega+\frac{c_lq_r}{\widetilde C} V \pa_p T+\widetilde L(S_{cd}-S_{ev})\right]
  \partial_p^2Td\mathcal M.\label{est.pt.0}
\end{eqnarray}
Following the derivations in \cite{HKLT2} (see (87) and (88) there) we have
\begin{equation}
-\int_\mathcal M\partial_tT\partial_p^2Td\mathcal M
\geq\frac{d}{dt}\left(\frac{\|\partial_pT\|^2}{2}+\alpha_{0T}
\left.\int_{\mathcal M'}\left(\frac{T^2}{2}-TT_{b0}\right) d\mathcal M'\right|_{p_0}\right)-C(\|\partial_pT\|+C_3), \label{est.pt.1}
\end{equation}
\begin{equation}
\int_\mathcal M\mathcal D^{T}T\partial_p^2Td\mathcal M
\geq \frac34\left(\mu_{T}\|\nabla_h\partial_pT\|^2+\nu_{T}
\|\partial_p^2T\|_w^2\right)-C(\|\partial_pT\|^2+C_4). \label{est.pt.2}
\end{equation}
By the H\"older, Sobolev, and Young inequalities, one deduces
\begin{eqnarray}
  &&\int_\mathcal M(\overline{\mathbf{v}}_h\cdot\nabla_hT+\overline\omega\partial_pT)
\partial_p^2Td\mathcal M\nonumber\\
&\leq&C\|(\overline{\mathbf{v}}_h,\overline\omega)\|_{L^q(\M)}\|\nabla T\|_{L^{\frac{2q}{q-2}}(\M)}\|\partial_p^2T\|\leq C\|(\overline{\mathbf{v}}_h,\overline\omega)\|_{L^q(\M)}\|\nabla T\|^{1-\frac3q}\|\nabla T\|_{H^1(\M)}^{1+\frac3q}\nonumber\\
&\leq&
\eta\|\nabla^2T\|^2+C_\eta\left(\|(\overline{\mathbf{v}}_h,\overline\omega)\|_{L^q(\M)}^{\frac{2q}{q-3}}+C_5\right)\|\nabla T\|^2,\label{est.pt.3}
\end{eqnarray}
for any positive number $\eta$.
Moreover, by (\ref{bound.kappa}) and Proposition \ref{LinftyT}, we obtain by the Young inequality that
\begin{eqnarray}
-\int_\mathcal M\widetilde\kappa\frac{T}{p}\overline\omega\partial_p^2Td\mathcal M+\int_\mathcal M \frac{c_l q_r}{\widetilde C} V \pa_pT \pa_p^2 T d\mathcal M
 \leq\eta\|\partial_p^2T\|_w^2+C_\eta(\|\overline\omega\|^2+C_6\|\partial_pT\|^2),\label{est.pt.4}
\end{eqnarray}
for any positive number $\eta$.
Due to the nonnegativity and uniform boundedness of $T, q_v, q_c, q_r$, it follows from the Young inequality that
\begin{equation}
  \int_\mathcal M\widetilde{L}(S_{cd}-S_{ev})\partial_p^2Td\mathcal M
  \leq C\int_\mathcal M|\partial_p^2T|d\mathcal M
  \leq\eta\|\partial_p^2T\|^2+C_\eta,\label{est.pt.5}
\end{equation}
for any positive number $\eta$.
Substituting (\ref{est.pt.1})--(\ref{est.pt.5}) into (\ref{est.pt.0}), one obtains
\begin{eqnarray}
\frac{d}{dt}\left(\frac{\|\partial_pT\|^2}{2}+\alpha_{0T}
\left.\int_{\mathcal M'}\left(\frac{T^2}{2}-TT_{b0}\right) d\mathcal M'\right|_{p_0}\right)+  \frac34(\mu_{T} \|\nabla_h\partial_pT\|^2+\nu_{T} \|\partial_p^2T\|_w^2) \nonumber\\
\leq 3\eta\|\nabla^2T\|^2+C_\eta\left(\|(\overline{\mathbf{v}}_h,\overline\omega)\|_{L^q(\M)}^{\frac{2q}{q-3}}
+\|\overline\omega\|^2+C_7\right)(\|\nabla T\|^2+C_8),\label{EST.PT}
\end{eqnarray}
for any positive number $\eta$.

Next, we estimate the horizontal gradient $\nabla_hT$. Multiplying equation (\ref{AT}) by $-\Delta_hT$ and integrating the resultant over $\mathcal M$ yields
\begin{eqnarray}
  &&\int_\mathcal M(-\partial_tT+\mathcal D^TT)\Delta_hTd\mathcal M
\nonumber\\
&=&\int_\mathcal M\left[\overline{\mathbf{v}}_h\cdot\nabla_hT+\overline\omega\partial_pT
  -\widetilde\kappa\frac{T}{p}\overline\omega+  \frac{c_l q_r}{\widetilde C} V \pa_pT+\widetilde L(S_{cd}-S_{ev})\right]
  \Delta_hTd\mathcal M.\label{est.ht.0}
\end{eqnarray}
Following the derivations in \cite{HKLT2} (see (94) and (95) there) we have
\begin{eqnarray}
-\int_\mathcal M\partial_tT\Delta_hTd\mathcal M&\geq&\frac{d}{dt}\left(\frac{\|\nabla_hT\|^2}{2}+\alpha_{\ell T}
\int_{\Gamma_\ell}\left(\frac{T^2}{2}-TT_{b\ell}\right)d\Gamma_\ell
\right)\nonumber\\
&&-C(C_9+\|\nabla_hT\|),\label{est.ht.1}\\
\int_\mathcal M\mathcal D^TT\Delta_hTd\mathcal M
&\geq&\mu_T\|\Delta_hT\|^2+\nu_T\|\nabla_h\partial_pT\|_w^2
-C.\label{est.ht.2}
\end{eqnarray}
Similar to (\ref{est.pt.3})--(\ref{est.pt.5}), we have
\begin{equation}
\int_\mathcal M(\overline{\mathbf{v}}_h\cdot\nabla_hT+\overline\omega\partial_pT)
\Delta_hTd\mathcal M\leq
\eta\|\nabla^2T\|^2+C_\eta\left(
\|(\overline{\mathbf{v}}_h,\overline\omega)\|_{L^q(\M)}^{\frac{2q}{q-3}}+C_{10}\right)\|\nabla T\|^2,
\label{est.ht.3}
\end{equation}
\begin{equation}
-\int_\mathcal M\widetilde\kappa\frac{T}{p}\overline\omega\Delta_hTd\mathcal M+
\int_\mathcal M \frac{c_l q_r}{\widetilde C} V \pa_pT\Delta_h T d\M\leq\eta\|\Delta_hT\|^2+C_\eta(\|\overline\omega\|^2+C_{11}\|\partial_pT\|_w^2),\label{est.ht.4}
\end{equation}
\begin{equation}
  \int_\mathcal M \widetilde L(S_{cd}-S_{ev})\Delta_hTd\mathcal M
  \leq\eta\|\Delta_hT\|^2+C_\eta,\label{est.ht.5}
\end{equation}
for any positive number $\eta$.
Substituting (\ref{est.ht.1})--(\ref{est.ht.5}) into (\ref{est.ht.0}) gives
\begin{eqnarray}
  \frac{d}{dt}\left(\frac{\|\nabla_hT\|^2}{2}+\alpha_{\ell T}
\int_{\Gamma_\ell}\left(\frac{T^2}{2}-TT_{b\ell}\right)d\Gamma_\ell
\right)+\frac34(\mu_T\|\Delta_hT\|^2+\nu_T\|\nabla_h\partial_pT\|_w^2)\nonumber\\
\leq 3\eta\|\nabla^2T\|^2+C_\eta\left(\|(\overline{\mathbf{v}}_h,\overline\omega)\|_{L^q(\M)}^{\frac{2q}{q-3}}
+\|\overline\omega\|^2+C_{12}\right)(\|\nabla T\|^2+C_{13}),\label{EST.HT}
\end{eqnarray}
for any positive number $\eta$.

Summing (\ref{EST.PT}) with (\ref{EST.HT}) yields
\begin{eqnarray}
&&\frac34\Big(\mu_{T} \|\nabla_h\partial_pT\|^2+\nu_{T} \|\partial_p^2T\|_w^2+\mu_T\|\Delta_hT\|^2+\nu_T\|\nabla_h\partial_pT\|_w^2\Big)
\nonumber\\
&&+\frac{d}{dt}\left(\frac{\|\nabla T\|^2}{2}+\alpha_{0T}
\left.\int_{\mathcal M'}\left(\frac{T^2}{2}-TT_{b0}\right) d\mathcal M'\right|_{p_0}
+\alpha_{\ell T}
\int_{\Gamma_\ell}\left(\frac{T^2}{2}-TT_{b\ell}\right)d\Gamma_\ell
\right)\nonumber\\
&\leq& 6\eta\|\nabla^2T\|^2+C_\eta\left(\|(\overline{\mathbf{v}}_h,\overline\omega)\|_{L^q(\M)}^{\frac{2q}{q-3}}
+\|\overline\omega\|^2+C_{14}\right)(\|\nabla T\|^2+C_{15}),\label{EST.NT}
\end{eqnarray}
for any positive number $\eta$. Applying the elliptic estimate (see Proposition A.2 in \cite{HKLT}) to the elliptic equation $\cal D^TT=f$ subject to the boundary condition (\ref{bound.0})--(\ref{bound.ll}) leads to
$$
\|\nabla^2T\|\leq C(\|\cal D^TT\|+C_{16}\|\nabla T\|+C_{17})\leq C(\|\Delta_hT\|+\|\partial_p^2T\|_w^2+C_{18}\|\nabla T\|+C_{19}).
$$
Thanks to this, and noticing that
\begin{eqnarray*}
\int_{\mathcal M'}\left(\frac{T^2}{2}-TT_{b0}\right)d\mathcal M'
=\frac12 \int_{\mathcal M'}\left[(T-T_{b0})^2-T_{b0}^2\right]d\mathcal M'
\geq- \int_{\mathcal M'}T_{b0}^2 d\mathcal M' \geq-C,
\end{eqnarray*}
and
\begin{eqnarray*}
\int_{\Gamma_\ell}\left(\frac{T^2}{2}-TT_{b\ell}
\right)d
\Gamma_\ell&=&\frac12\int_{\Gamma_\ell}\left[(T-T_{b\ell})^2-T_{b\ell}^2
\right]d
\Gamma_\ell\geq
-\frac12\int_{\Gamma_\ell}T_{b\ell}^2
d\Gamma_\ell\geq-C,
\end{eqnarray*}
the conclusion follows by applying the Gr\"onwall inequality to (\ref{EST.NT}).
\end{proof}

We are now ready to give the proof of our main result, Theorem \ref{thm}.

\begin{proof}[Proof of Theorem \ref{thm}]
By Proposition \ref{Aloc}, there is a unique
local solution $( T, q_v, q_c, q_r)$ to system (\ref{AT})--(\ref{Aqr}), subject to
(\ref{bound.0})--(\ref{bound.ll}). Due to Proposition \ref{bddq} and Proposition \ref{nonneg.T}, $q_c, q_c, q_r,$ and $T$
are all nonnegative and, thus,
$(T, q_v, q_c, q_r)$ is a local solution to the original system, subject to the corresponding
initial and boundary conditions. By applying Proposition \ref{Aloc} inductively, one can extend the local solution
to the maximal time of existence $\T_{\text{max}}$ characterized by (\ref{T_max}).
We need to prove $\mathcal T_{\text{max}}=\infty$. Assume, by contradiction, that $\mathcal T_{\text{max}}<\infty$.
Then, by Propositions \ref{bddq}--\ref{prop.H1}, we have the estimate
\begin{align*}
  \sup_{0\leq t\leq\mathcal T}
  \|(T,q_v, q_c, q_r)\|_{H^1(\mathcal M)}\leq C_0,
\end{align*}
for any $\mathcal T\in(0,\mathcal T_{\text{max}})$, and $C_0$ is a positive constant independent of
$\mathcal T\in(0,\mathcal T_{\text{max}})$. This contradicts (\ref{T_max}) and, thus,
$\T_{\text{max}}=\infty$, proving the conclusion.
\end{proof}

\noindent\textbf{Acknowledgments.} S.H. acknowledges support by the Austrian Science Fund via
the previous Hertha-Firnberg project T-764 and via the SFB ``Taming Complexity in Partial Differential Systems'' with project number F 65. R.K.\ acknowledges support by Deutsche Forschungsgemeinschaft through
Grant CRC 1114 ``Scaling Cascades in Complex Systems'', projects A02 and C06. J.L. acknowledges support by the National Natural Science Foundation of
China (11971009 and 11871005), by the Guangdong Basic and Applied Basic Research Foundation (2019A1515011621, 2020B1515310005, 2020B1515310002, and 2021A1515010247), and by the Key Project of National Natural Science Foundation of China (12131010). E.S.T.acknowledges support by the Einstein Stiftung/Foundation-Berlin, Einstein Visiting Fellowship No. EVF-2017-358.
RK thanks the Centre International de Rencontres Math\'ematique (CIRM) and
the city of Marseille for their support in the framework of their Jean
Morlet Chair programme on ``Nonlinear partial differential equations in
fluid mechanics". RK and EST would also like to thank the Isaac Newton
Institute for Mathematical Sciences for support and hospitality during
the programme TUR when part of this work was undertaken. This work was
supported by EPSRC Grant Number EP/R014604/1.



\end{document}